\newcommand{\mailto}[1]{\href{mailto:#1}{\nolinkurl{#1}}}
\titleformat*{\subsection}{\bfseries\itshape}
\providecommand{\keywords}[1]
{
\noindent{\small
	\textbf{Keywords:} #1}
}
\providecommand{\msc}[1]
{
\noindent{\small
	\textbf{2010 MSC:} #1}
}
\newtheorem{theorem}{Theorem}[section]
\newtheorem{lemma}[theorem]{Lemma}
\newtheorem{proposition}[theorem]{Proposition}
\newtheorem*{thm}{Theorem}
\theoremstyle{definition}
\newtheorem*{acknowledgments}{Acknowledgments}
\newcommand{\R}{{\mathbb R}}
\newcommand{\N}{{\mathbb N}}
\newcommand{\C}{{\mathbb C}}
\newcommand{\cD}{\mathcal{D}}
\newcommand{\cE}{\mathcal{E}}
\newcommand{\cF}{\mathcal{F}}
\DeclareMathOperator{\ai}{Ai}
\DeclareMathOperator{\bi}{Bi}
\DeclareMathOperator*{\re}{Re}
\DeclareMathOperator*{\im}{Im}
\DeclareMathOperator{\dom}{\cD}
\DeclareMathOperator{\spec}{\sigma}
\newcommand{\abs}[1]{\left\lvert #1 \right\rvert}
\newcommand{\abss}[1]{\bigl\lvert #1 \bigr\rvert}
\newcommand{\norm}[1]{\left\lVert #1 \right\rVert}
\title {\bf One-dimensional Stark operators in the half-line}
\author[1]{Julio H. Toloza%
			\thanks{\faEnvelopeO\, {julio.toloza@uns.edu.ar (corresponding author)}}%
			}
\author[2]{Alfredo Uribe%
			\thanks{\faEnvelopeO\, {alfredo.uribe.83@gmail.com}}%
			}
\affil[1]{Instituto de Matemática (INMABB)\\
		 Departamento de Matemática\\
		 Universidad Nacional del Sur (UNS) - CONICET\\
		 Bahía Blanca\\
		 Argentina}
\affil[2]{Departamento de Matemáticas\\
		 Universidad Autónoma Metropolitana\\
		 Av. San Rafael Atlixco 186\\
		 Col. Vicentina, Iztapalapa, C.P. 09340, México D.F.}
\begin{document}

\date{}
\maketitle

\begin{abstract}
We obtain asymptotic formulas for the spectral data of
perturbed Stark operators associated with the differential expression
\[
-\frac{d^2}{dx^2} + x + q(x),
\quad
x\in [0,\infty),
\quad
q\in L^1(0,\infty),
\]
and having either Dirichlet or Neumann boundary condition at the origin.
\end{abstract}

\bigskip
\keywords{Stark operators, spectral theory, asymptotic analysis}

\msc{
34E10,	
34L15,	
81Q05,	
81Q10	
}


\section{Introduction and statement of results}

Self-adjoint operators of the form
\[
-\frac{d^2}{dx^2} + f(x) + q(x), \quad x\in(0,\infty),
\]
with domain in $L^2(\R_+)$, occur naturally in the context of quantum-mechanical
operators with spherical symmetry; here $q$ plays the role of a small perturbation of
$f$ in some suitable sense. The spectral analysis of this kind of operators have attracted
considerable attention for various choices of the dominant term $f$, usually
in connection with well-known special functions. Most remarkable among them are the
investigations concerning perturbed Bessel operators
\cite{ahm2,car,car2,guliyev,kst,kst2,kt,kt2}
(corresponding to $f(x)=l(l+1)x^{-2}$, $l\ge -1/2$), and perturbed harmonic oscillator
in the half-line \cite{chelkak2.5,chelkak3} (in this case $f(x)=x^{2}$); the latter is
closely related to the spectral analysis of perturbed harmonic oscillator in the whole
real line \cite{chelkak1,chelkak2}.

In this paper we consider self-adjoint operators associated with a differential
expression of the form
\begin{equation*}
\tau = -\frac{d^2}{dx^2} + x + q(x),
\quad
x\in [0,\infty),
\end{equation*}
acting in the space $L^2(\R_+)$, where $q$ is a real-valued function that lies in
$L^1(\R_+)$.
Self-adjoint operators are defined by adjoining to $\tau$ a standard boundary condition
(see Section~\ref{sec:preliminaries}) at the left endpoint. For the sake of brevity,
we only consider Dirichlet ($\varphi(0)=0$) and Neumann ($\varphi'(0)=0$) boundary
conditions; let $H^D$ and $H^N$ denote the corresponding self-adjoint operators.

Clearly the unperturbed case $q\equiv0$ can be solved explicitly. For in this case
a square-integrable solution to the associated eigenvalue problem is given by
the Airy function of the first kind $\ai(z)$ so
\[
\spec(H_0^D)
	= \{-a_k\}_{k\in\N}
\quad\text{and}\quad
\spec(H_0^N)
	= \{-a_k'\}_{k\in\N},
\]
where $a_k$ and $a_k'$ denote the zeros of $\ai(z)$ and its
derivative $\ai'(z)$, respectively. The corresponding set of norming constants
$\{\nu^D_{0,k}\}_{k\in\N}$ and $\{\nu^N_{0,k}\}_{k\in\N}$ are then given by
\[
\frac{1}{\nu^D_{0,k}} = \frac{\norm{\ai(\cdot+a_k)}^2}{\left(\ai'(a_k)\right)^2} = 1
\quad\text{and}\quad
\frac{1}{\nu^N_{0,k}} = \frac{\norm{\ai(\cdot+a'_k)}^2}{\left(\ai(a'_k)\right)^2} = -a'_k.
\]
The related results for arbitrary $q$ are stated in Theorems~\ref{thm-dirichlet-eigenvalues},
\ref{thm:norming-constant-dirichlet}, \ref{thm-mixed-bc-eigenvalues} and
\ref{thm:norming-constant-neumann}. They can be summarized as follows:

\begin{thm}
Suppose $q\in L^1(\R_+)$. Then the eigenvalues and norming constants
of $H^D$, the operator associated with $\tau$ and boundary condition $\varphi(0)=0$,
satisfy
\[
\lambda^D_k = \left(\tfrac32\pi\bigl(k-\tfrac14\bigr)\right)^{2/3}
			\left(1 + O(k^{-1})\right)
\quad\text{and}\quad
\frac{1}{\nu^D_k} = 1+o(1)
\]
as $k\to\infty$. Similarly, the eigenvalues and norming constants
of $H^N$ corresponding to the boundary condition $\varphi'(0)=0$ satisfy
\[
\lambda^N_k = \left(\tfrac32\pi\bigl(k-\tfrac34\bigr)\right)^{2/3}
			\left(1 + O(k^{-1})\right)
\quad\text{and}\quad
\frac{1}{\nu^N_k} = \left(\tfrac32\pi\bigl(k-\tfrac34\bigr)\right)^{2/3}
			\left(1 + o(1)\right)
\]
as $k\to\infty$.
\end{thm}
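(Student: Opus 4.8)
\noindent\emph{Sketch of the proof.} The plan is to prove the four constituent statements (Theorems~\ref{thm-dirichlet-eigenvalues}, \ref{thm:norming-constant-dirichlet}, \ref{thm-mixed-bc-eigenvalues} and \ref{thm:norming-constant-neumann}) through one common scheme, built on the observation that after the change of variable $z=x-\lambda$ the unperturbed equation $-\psi''+x\psi=\lambda\psi$ becomes exactly Airy's equation $\psi''=z\psi$, whose recessive solution at $+\infty$ is $\ai(z)$, with dominant companion $\bi(z)$ and Wronskian $W(\ai,\bi)=\pi^{-1}$. The first step is to construct, for $\lambda$ in a right half-line, the solution $\psi(\cdot,\lambda)$ of $-\psi''+(x+q)\psi=\lambda\psi$ that is square-integrable near $+\infty$, realized as the fixed point of a Volterra integral equation of the form
\[
\psi(x,\lambda)=\ai(x-\lambda)+\pi\int_x^\infty\bigl[\ai(x-\lambda)\,\bi(t-\lambda)-\bi(x-\lambda)\,\ai(t-\lambda)\bigr]\,q(t)\,\psi(t,\lambda)\,dt .
\]
Using $q\in L^1(\R_+)$ together with the classical uniform estimates for $\ai,\bi$ and their derivatives — on the oscillatory side $|\ai(-s)|,|\bi(-s)|=O(s^{-1/4})$ and $|\ai'(-s)|,|\bi'(-s)|=O(s^{1/4})$ as $s\to+\infty$, complemented by the uniform Airy asymptotics across the turning-point layer $x\approx\lambda$ — one shows that this equation has a unique bounded solution with $\psi(x,\lambda)=\ai(x-\lambda)(1+o(1))$ as $x\to\infty$, that $\lambda\mapsto\psi(0,\lambda)$ and $\lambda\mapsto\psi'(0,\lambda)$ are entire, and that
\[
\psi(0,\lambda)=\ai(-\lambda)+R_D(\lambda),\qquad \psi'(0,\lambda)=\ai'(-\lambda)+R_N(\lambda),
\]
where, heuristically, $R_D(\lambda)$ is of order $\|q\|_{L^1}\,\lambda^{-3/4}$ and $R_N(\lambda)$ of order $\|q\|_{L^1}\,\lambda^{-1/4}$, hence negligible next to the oscillation amplitudes $\lambda^{-1/4}$ of $\ai(-\lambda)$ and $\lambda^{1/4}$ of $\ai'(-\lambda)$.

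Since the spectra are discrete, the eigenvalues of $H^D$ (resp.\ $H^N$) are exactly the $\lambda>0$ with $\psi(0,\lambda)=0$ (resp.\ $\psi'(0,\lambda)=0$). Now $\ai(-\lambda)$ vanishes simply at $\lambda=-a_k$ with $|\ai'(a_k)|\sim\pi^{-1/2}(-a_k)^{1/4}$, while $\ai'(-\lambda)$ vanishes simply at $\lambda=-a_k'$ with $\ai''(a_k')=a_k'\ai(a_k')$ and $|\ai''(a_k')|\sim\pi^{-1/2}(-a_k')^{3/4}$. A Rouché comparison of $\psi(0,\cdot)$ with $\ai(-\cdot)$ (resp.\ of $\psi'(0,\cdot)$ with $\ai'(-\cdot)$) on suitable contours, together with a counting argument ruling out any loss or gain of eigenvalues, localizes the $k$-th eigenvalue near $-a_k$ (resp.\ near $-a_k'$). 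Linearizing there gives $\lambda^D_k=-a_k+\mu^D_k$ with $\ai'(a_k)\,\mu^D_k=-R_D(-a_k)(1+o(1))$, hence $\mu^D_k=O(k^{-2/3})$, and similarly $\mu^N_k=O(k^{-2/3})$. Inserting the classical expansions $-a_k=\bigl(\tfrac32\pi(k-\tfrac14)\bigr)^{2/3}(1+O(k^{-2}))$ and $-a_k'=\bigl(\tfrac32\pi(k-\tfrac34)\bigr)^{2/3}(1+O(k^{-2}))$ and collecting the errors yields the claimed formulas for $\lambda^D_k$ and $\lambda^N_k$ with relative error $O(k^{-1})$.

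For the norming constants I would use $1/\nu^D_k=\norm{\psi(\cdot,\lambda^D_k)}^2/\bigl(\psi'(0,\lambda^D_k)\bigr)^2$ and $1/\nu^N_k=\norm{\psi(\cdot,\lambda^N_k)}^2/\bigl(\psi(0,\lambda^N_k)\bigr)^2$. The exact identity $\int_c^\infty\ai(t)^2\,dt=\ai'(c)^2-c\,\ai(c)^2$, obtained by differentiating the right-hand side and using $\ai''=c\,\ai$, evaluated at $c=a_k$ and $c=a_k'$ gives $\norm{\ai(\cdot+a_k)}^2=\ai'(a_k)^2$ and $\norm{\ai(\cdot+a_k')}^2=-a_k'\,\ai(a_k')^2$, i.e.\ the unperturbed ratios $1/\nu^D_{0,k}=1$ and $1/\nu^N_{0,k}=-a_k'$. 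Turning on $q$ changes these ratios only by a factor $1+o(1)$: from the first-step estimates specialized to $\lambda=\lambda_k$ one gets $\norm{\psi(\cdot,\lambda^D_k)}^2=\ai'(a_k)^2(1+o(1))$ and $\bigl(\psi'(0,\lambda^D_k)\bigr)^2=\ai'(a_k)^2(1+o(1))$ (and the analogous pair in the Neumann case, with $a_k$ replaced by $a_k'$), which requires controlling $\psi(x,\lambda_k)$ in $L^2$ over the whole half-line, including the turning-point and exponentially-decaying regions. Combining with $-a_k'\sim\bigl(\tfrac32\pi(k-\tfrac34)\bigr)^{2/3}$ yields the stated asymptotics of $1/\nu^N_k$, and the boundedness of the first-step remainders at $\lambda=\lambda^D_k$ gives $1/\nu^D_k=1+o(1)$.

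The main obstacle is the uniformity of the first-step estimates. Away from the points $-a_k$ and $-a_k'$ the perturbation is negligible next to the leading Airy term, but at the eigenvalues themselves the two are of the same order, so one needs control of $\psi(0,\lambda)$, $\psi'(0,\lambda)$ and $\norm{\psi(\cdot,\lambda)}$ that is uniform for $\lambda$ in shrinking neighbourhoods of each $-a_k$ and $-a_k'$, with explicit $k$-dependence. This forces careful use of the uniform Airy asymptotics through the turning-point region $x\approx\lambda$ and some attention to the exponential growth of $\bi$ inside the kernel, which must be shown to be absorbed by the decay of $\psi$ there. Once those uniform bounds are secured, the Rouché and counting arguments, the norming-constant identity, and the zeros-of-Airy expansions are all routine or classical.
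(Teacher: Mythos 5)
Your treatment of the eigenvalues is essentially the paper's: the same Volterra equation integrated from $+\infty$ with kernel $\pi\left[\ai(x-\lambda)\bi(t-\lambda)-\bi(x-\lambda)\ai(t-\lambda)\right]$, then Rouch\'e on small circles $\abs{\zeta-(k-\tfrac14)\pi}=\tfrac{\pi}{2}$ (in the variable $\zeta=\tfrac23z^{3/2}$) together with a large counting contour, plus a mean-value-theorem localization near $-a_k$. Two caveats. First, the obstacle you flag --- the exponential growth of $\bi$ inside the kernel --- is precisely where the paper's one nontrivial device lives: the kernel is rewritten in terms of the rotated solutions $\theta_\pm(z,x)=2\sqrt{\pi}e^{\mp i\pi/6}\ai((x-z)e^{\mp 2\pi i/3})$, whose bound $\exp(+\tfrac23\re(x-z)^{3/2})$ exactly cancels the bound $g_A(x-z)=\exp(-\tfrac23\re(x-z)^{3/2})$ of $\ai$ in the iterated integrals, combined with the monotonicity of $x\mapsto g_A(x-z)$; your sketch names the difficulty but supplies no mechanism for it. Second, your remainder rates $R_D(\lambda)=O(\norm{q}_{L^1}\lambda^{-3/4})$ and $R_N(\lambda)=O(\norm{q}_{L^1}\lambda^{-1/4})$ are false for general $q\in L^1$: mass of $q$ near the turning point $x\approx\lambda$ contributes roughly $\lambda^{-1/4}\int_{\abs{x-\lambda}\le 1}\abs{q}$ to $R_D$, which need not be $O(\lambda^{-1/2})$. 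The correct uniform statements are $R_D(\lambda)=o(\lambda^{-1/4})$ and $R_N(\lambda)=o(\lambda^{1/4})$, governed by $\omega(\lambda)=\int_0^\infty\abs{q(x)}(1+\abs{x-\lambda})^{-1/2}dx\to0$ (Lemma~\ref{rem:maybe-true}); accordingly your $\mu_k^D=O(k^{-2/3})$ must be weakened to $o(k^{-1/3})$, which still gives the stated $O(k^{-1})$ relative error, so the conclusion survives.

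For the norming constants you take a genuinely different route: you estimate $\norm{\psi(\cdot,\lambda_k)}^2$ directly, whereas the paper uses the Wronskian identity $W'(\eta,\dot{\eta})=-\eta^2$ to write $1/\nu_k^D=-\dot{\psi}(\lambda_k^D,0)/\psi'(\lambda_k^D,0)$ and $1/\nu_k^N=\dot{\psi}'(\lambda_k^N,0)/\psi(\lambda_k^N,0)$, reducing everything to bounds on $\partial_z\psi$ and $\partial_z\psi'$ at $x=0$ obtained by differentiating the Volterra equation in $z$ and iterating; this bypasses the $L^2$ estimate through the turning-point region that you yourself identify as delicate. Your route can be completed: the pointwise bound $\abs{\psi-\psi_0}\le C\omega\,g_A(x-\lambda)(1+\abs{x-\lambda}^{1/4})^{-1}$ integrates to $\norm{\psi-\psi_0}^2=O(\omega^2\lambda^{1/2})=o\bigl(\norm{\psi_0(\lambda,\cdot)}^2\bigr)$ since $\norm{\psi_0(\lambda,\cdot)}^2=\pi\left[\ai'(-\lambda)^2+\lambda\ai(-\lambda)^2\right]\asymp\lambda^{1/2}$, and the smallness $\ai(-\lambda_k^D)=o(\lambda_k^{-1/4})$ needed to kill the second term in that bracket follows for free from the eigenvalue equation $\psi(0,\lambda_k^D)=0$ together with the same perturbation bound. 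So the scheme is sound and the classical ingredients (Airy zero expansions, the integral $\int_c^\infty\ai^2=\ai'(c)^2-c\,\ai(c)^2$, Rouch\'e) are used correctly; what remains open in your sketch are exactly the uniform perturbation estimates, which are the substance of the proof rather than a routine afterthought.
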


The direct spectral problem for
the one-dimensional Stark operator in the semi-axis, with Dirichlet boundary condition,
has also been treated recently in \cite{mk}, where the authors use transformation operator
methods and their results are valid under the more restrictive assumption
$q\in C^{(1)}[0,\infty)\cap L^1(\R_+,x^4 dx)$, $q(x)=o(x)$ as $x\to\infty$.
The corresponding inverse spectral problem is discussed in \cite{lk}.

Finally, it is worth mentioning that one-dimensional Stark
operators have been studied mostly when defined on the whole real line, see for
instance \cite{calogero,its,khanmamedov,katchalov,liu,sukhanov}.
As it is well-known, Stark operators on the real line are
characterized by the presence of  resonances; see \cite{korot1,korot2}
for some recent developments on this subject.


\section{Preliminaries}
\label{sec:preliminaries}

In what follows, we consider the differential expression
\[
\tau = -\frac{d^2}{dx^2} + x + q(x),
\quad
x\in [0,\infty),
\]
where $q\in L^1(\R_+)$ and it is real-valued.

By standard theory (see e.g. \cite[Ch.~6]{weidmann}), $\tau$ is in the
limit-circle case at $0$ and in the limit-point case at $\infty$. Hence (the
closure of) the minimal operator $H'$ defined by $\tau$ is symmetric and has
deficiency indices $(1,1)$.
Also, there exists a solution $\psi(z,x)$ to the eigenvalue equation
$\tau\varphi=z\varphi$,
real entire as a function of $z\in\C$ for every $x\in[0,\infty)$, such that
$\psi(z,\cdot)\in L^2(\R_+)$ for every $z\in\C$. This function is unique up to
multiplication
by a zero-free, real entire function of the spectral parameter $z$.

The self-adjoint extensions $H^\beta$ ($0\le\beta<\pi$) of $H'$ are
determined by imposing the usual boundary condition at $x=0$. Namely,
\begin{equation*}
\dom(H^\beta)
	= \left\{\begin{gathered}
			\varphi\in L^2(\R_+) : \varphi,\varphi'\in\text{AC}_\text{loc}([0,\infty)),\
			\tau\varphi\in L^2(\R_+),
			\\[1mm]
			\cos(\beta)\varphi(0) - \sin(\beta)\varphi'(0) = 0
			\end{gathered}\right\},
\quad
H^\beta\varphi = \tau\varphi.
\end{equation*}
Since $x+q(x)\to\infty$ as $x\to\infty$, it follows that $\spec(H^\beta)$ has only eigenvalues
of multiplicity one, possibly with a finite number of them being negative. Moreover,
\[
\spec(H^\beta)
	= \{\lambda\in\R: \cos(\beta)\psi(\lambda,0) - \sin(\beta)\psi'(\lambda,0)  = 0\}
\quad (0\le\beta<\pi).
\]
We henceforth suppose $\spec(H^\beta)$ is arranged as an increasing sequence, viz.,
$\spec(H^\beta)=\{\lambda_k^\beta\}_{k\in\N}$ with $\lambda^\beta_k<\lambda^\beta_{k+1}$.

In what follows we use the notation $'=\partial_x$ and $\dot{}=\partial_z$.
Along with the spectrum $\{\lambda_k^\beta\}_{k\in\N}$ one has the corresponding set of
norming constants $\{\nu_k^\beta\}_{k\in\N}$. In terms of $\psi(z,x)$, the norming
constants for Dirichlet ($\beta=0$) and Neumann ($\beta=\pi/2$) boundary conditions
are given by the formulas
\begin{equation*}
\frac{1}{\nu^D_k}
	= \frac{\norm{\psi(\lambda^D_k,\cdot)}^2}
		{\left[\psi'(\lambda^D_k,0)\right]^2}
	= - \frac{\dot{\psi}(\lambda^D_k,0)}{\psi'(\lambda^D_k,0)}
\quad\text{and}\quad
\frac{1}{\nu^N_k}
	= \frac{\norm{\psi(\lambda^N_k,\cdot)}^2}
		{\left[\psi(\lambda^N_k,0)\right]^2}
	= \frac{\dot{\psi}'(\lambda^N_k,0)}{\psi(\lambda^N_k,0)},
\end{equation*}
respectively. The second part of these equations follows from the identity
$W'(\eta,\dot{\eta})= -\eta^2$, which is valid for any solution to $\tau\eta=z\eta$.
We recall that the spectral data
$(\{\mu_k^\beta\}_{k\in\N},\{\nu_k^\beta\}_{k\in\N})$ are the poles
and residues of the Weyl function associated with $H^\beta$, and they determine
the potential $q$ by virtue of the Borg--Marchenko uniqueness theorem
\cite{gesztesy}.

As mentioned in the Introduction, the unperturbed case $q=0$ can be treated explicitly.
A solution to the equation $-\varphi'' + (x-z)\varphi$, belonging to $L^2(\R_+)$, is
\[
\psi_0(z,x) = \sqrt{\pi}\,\ai(x-z),
\]
where the factor $\sqrt{\pi}$ is included for convenience. It follows that
\[
\spec(H_{0}^{D}) = \{-a_k\}_{k\in\N}\quad\text{and}\quad
\spec(H_{0}^{N}) = \{-a_k'\}_{k\in\N}
\]
respectively, where the zeros of $\ai(z)$ and $\ai'(z)$ obey the asymptotic formulas
\begin{equation}
\label{eq:zeros-airy}
-a_k
		= \left(\tfrac32\pi\bigl(k-\tfrac14\bigr)\right)^{2/3}
			\left(1 + O(k^{-2})\right)
\end{equation}
and
\begin{equation*}
-a_k'
		= \left(\tfrac32\pi\bigl(k-\tfrac34\bigr)\right)^{2/3}
			\left(1 + O(k^{-2})\right)
\end{equation*}
as $k\to\infty$ (see \cite[\S 9.9(iv)]{nist}).

\begin{lemma}
\label{lem:g_Ag_B}
There exists a constant $C_0>0$ such that
\begin{equation*}
\abss{\ai(z)}
\le C_0\frac{g_A(z)}{1+\abs{z}^{1/4}} \quad\text{and}\quad
\abss{\ai'(z)}
\le C_0 \bigl(1 + \abs{z}^{1/4}\bigr) g_A(z),
\end{equation*}
for all $z\in\C$, where $g_A(z) = \exp(-\tfrac23\re z^{3/2})$.
\end{lemma}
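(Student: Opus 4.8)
The plan is to obtain both bounds from the known uniform asymptotics of $\ai(z)$ and $\ai'(z)$ in the complex plane, together with compactness on a bounded region. First I would recall from the standard references (e.g. \cite[\S 9.7]{nist}) that for $z\to\infty$ with $\abs{\arg z}\le\pi-\delta$,
\[
\ai(z) = \frac{1}{2\sqrt{\pi}\,z^{1/4}}\,e^{-\frac23 z^{3/2}}\bigl(1+O(z^{-3/2})\bigr),
\qquad
\ai'(z) = -\frac{z^{1/4}}{2\sqrt{\pi}}\,e^{-\frac23 z^{3/2}}\bigl(1+O(z^{-3/2})\bigr),
\]
so that on the sector $\abs{\arg z}\le\pi-\delta$ one has $\abs{\ai(z)}\le C\,g_A(z)/(1+\abs{z}^{1/4})$ and $\abs{\ai'(z)}\le C\,\abs{z}^{1/4}g_A(z)$ for $\abs{z}$ large, because $\re z^{3/2}=\abs{z}^{3/2}\cos(\tfrac32\arg z)$ is exactly what appears in $g_A$. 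On the remaining sector near the negative real axis, $\abs{\arg z}\ge\pi-\delta$, I would instead use the oscillatory asymptotics of $\ai$ and $\ai'$ along with the connection formula $\ai(z)=e^{i\pi/3}\ai(e^{-2\pi i/3}z)+e^{-i\pi/3}\ai(e^{2\pi i/3}z)$: each rotated argument lands in a sector where the exponential estimate above applies, and one checks that the resulting exponential growth is dominated by $g_A(z)=\exp(-\tfrac23\re z^{3/2})$, which on that sector has $\re z^{3/2}$ small or negative, making $g_A(z)$ bounded below away from zero while $\ai,\ai'$ stay polynomially bounded there. This handles all large $\abs{z}$.

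Next I would dispose of the bounded region $\abs{z}\le R$ (for a fixed $R$ chosen once the ``large $\abs{z}$'' estimates kick in): there $\ai(z)$, $\ai'(z)$, and $1/g_A(z)=\exp(\tfrac23\re z^{3/2})$ are all continuous, hence bounded, and $1+\abs{z}^{1/4}$ and $\abs{z}^{1/4}$ are comparable to constants (the second bound is trivially true near $z=0$ since the right-hand side vanishes there only to order $\abs{z}^{1/4}$ while $\ai'(0)\ne0$ --- so one must be slightly careful, but $\abs{z}^{1/4}g_A(z)$ has a positive lower bound on any annulus $0<r\le\abs{z}\le R$, and for $\abs{z}\le r$ one shrinks $r$ if needed or simply notes $\ai'$ is bounded there while the claimed inequality $\abs{\ai'(z)}\le C_0\abs{z}^{1/4}g_A(z)$ would then \emph{fail}; hence the correct reading is that the inequality is asserted for $\abs{z}$ bounded away from $0$, or one absorbs the neighbourhood of the origin into a harmless additive constant). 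Combining the bounded-region constants with the large-$\abs{z}$ constants and taking $C_0$ to be their maximum gives the claim uniformly on $\C$.

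The main obstacle is bookkeeping the exponential factor across the Stokes lines: one must verify that $g_A(z)=\exp(-\tfrac23\re z^{3/2})$ --- with the principal branch of $z^{3/2}$ --- is the correct majorant on \emph{all} of $\C$, not just the sector of subdominant behaviour. The point is that $\re z^{3/2}\ge0$ precisely for $\abs{\arg z}\le\pi/3$, and for $\abs{\arg z}>\pi/3$ one has $\re z^{3/2}<0$, so $g_A(z)\ge1$ there; meanwhile in that range $\ai(z)$ grows at most like $\abs{z}^{-1/4}e^{\frac23\abs{\re z^{3/2}}}$ coming from one of the two rotated terms in the connection formula, and $e^{\frac23\abs{\re z^{3/2}}}=e^{-\frac23\re z^{3/2}}=g_A(z)$ on that range. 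So the two estimates match up at the Stokes lines $\arg z=\pm\pi/3$, and no contradiction arises. Once this matching is checked, the rest is routine. Alternatively, and perhaps more cleanly, one can avoid case analysis entirely by invoking the known \emph{global} bounds $\abs{\ai(z)}\le C(1+\abs{z})^{-1/4}g_A(z)$ and $\abs{\ai'(z)}\le C(1+\abs{z})^{1/4}g_A(z)$ that already appear in the literature on complex Airy estimates (e.g. in treatments of the Airy kernel or in \cite{chelkak3}); I would cite such a source and note that the stated lemma is an immediate consequence, with $C_0$ the larger of the two constants.
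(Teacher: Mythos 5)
Your proposal is correct and follows essentially the same route as the paper: the subdominant exponential asymptotic of $\ai$ and $\ai'$ on a sector containing the positive real axis, control near the negative real axis (the paper invokes the oscillatory expansion of $\ai(-z)$ from \cite[\S 9.7(ii)]{nist} directly, which is just the packaged form of your connection-formula argument, with the same identification $\abss{\im(-z)^{3/2}}=-\re z^{3/2}$ on that sector), and entirety/compactness on a bounded disc. Your caveat about the origin is well spotted and is a genuine defect of the statement rather than of your argument: since $\ai'(0)\neq 0$ while $\abs{z}^{1/4}g_A(z)$ vanishes at $z=0$, the second inequality is false as literally written for every $C_0$; the paper's proof omits the $\ai'$ case ``by an analogous argument'' and silently skips this, and the harmless repair is to replace $\abs{z}^{1/4}$ by $1+\abs{z}^{1/4}$ (correspondingly in \eqref{eq:bound-psi-0}), which changes nothing in the later asymptotic applications where only large $\abs{z}$ matters.
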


\begin{proof}
Define $\zeta =\frac23 z^{3/2}$ with branch cut along $\R_-$.
According to \cite[\S 9.7(ii)]{nist}, the function $\text{\rm Ai}(z)$
satisfies the asymptotic expansions
\begin{equation}
\label{eq:ai-asymp-positive}
\ai(z)
	= \frac{e^{-\zeta}}{2\sqrt{\pi}z^{1/4}}
		\left[1+O(\zeta^{-1})\right],
	    \quad \abs{\arg(z)}\le\pi-\delta,
\end{equation}
and
\begin{equation}
\label{eq:ai-asymp-negative}
\ai(-z)
	= \frac{1}{\sqrt{\pi}z^{1/4}}
	  \left[\sin\left(\zeta+\tfrac{\pi}{4}\right)
	  	+O\left(\zeta^{-1}e^{\abs{\im\zeta}}\right)\right],
	  \quad \abs{\arg(z)}\le\tfrac{2\pi}{3} - \delta,
\end{equation}
as $\abs{z}\to\infty$.
These expansions are uniform for any given small $\delta>0$ and $\abs{z}\ge 1$.
In what follows we set $\delta=\pi/3$.
Since $\text{\rm Ai}(z)$ is an entire function, it follows that there exists $C_0>0$
such that
\[
\abs{\ai(z)}
	\le \frac{C_0}{1+\abs{z}^{1/4}}\times
	\begin{cases}
		\exp(-\tfrac23 \re z^{3/2}),&
		\arg(z)\in [-\frac{2\pi}{3},\frac{2\pi}{3}],
		\\[1mm]
		\exp(\tfrac23\abss{\im(-z)^{3/2}}),&
		\arg(z)\in(-\pi,-\frac{2\pi}{3})\cup(\frac{2\pi}{3},\pi].
		\end{cases}
\]
Thus, the bound on $\ai(z)$ follows after noticing that
$\abss{\im(-z)^{3/2}}=\abss{\re z^{3/2}}$ and $\abss{\re z^{3/2}}=-\re z^{3/2}$ if
$\arg(z)\in(-\pi,-\frac{2\pi}{3})\cup(\frac{2\pi}{3},\pi]$.
The bound on $\ai'(z)$ follows an analogous argument so the details are omitted.
\end{proof}

Lemma~\ref{lem:g_Ag_B} clearly implies
\begin{equation}
\label{eq:bound-psi-0}
\abs{\psi_0(z,x)}
\le C_0\frac{e^{-\frac23\re(x-z)^{3/2}}}{1+\abs{x-z}^{1/4}}
\quad\text{and}\quad
\abs{\psi_0'(z,x)}
\le C_0 \bigl(1+\abs{x-z}^{1/4}\bigr) e^{-\frac23\re(x-z)^{3/2}}
\end{equation}
with $(x,z)\in\R_+\times\C$.
Later we will make use of a linearly independent solution to $-\varphi'' + (x-z)\varphi$.
An obvious choice is given by the Airy function of the second kind
\[
\theta_0(z,x) = \sqrt{\pi}\,\bi(x-z).
\]
However, it will be more convenient to use an independent solution of the form
\begin{equation*}
\theta_\pm(z,x)
	 = \theta_0(z,x) \mp i \psi_0(z,x)
	 = 2\sqrt{\pi} e^{\mp i\pi/6}\ai((x-z)e^{\mp i2\pi/3})
\end{equation*}
(in the context of the present work any of these two functions is equally good).
According to \cite[\S 9.2(iv)]{nist}, one has $W(\psi_0(z),\theta_\pm(z))\equiv 1$.
Moreover, since $\re(ze^{\pm i2\pi/3})^{3/2} = -\re z^{3/2}$, we have the bounds
\begin{equation}
\label{eq:bound-theta-pm}
\abs{\theta_\pm(z,x)}
	\le 2C_0\frac{e^{\frac23\re(x-z)^{3/2}}}{1+\abs{x-z}^{1/4}}
\quad\text{and}\quad
\abs{\theta'_\pm(z,x)}
	\le 2C_0 \bigl(1+\abs{x-z}^{1/4}\bigr) e^{\frac{2}{3}\re(x-z)^{3/2}}.
\end{equation}

\begin{lemma}
\label{lem:stupid-lemma}
The map $x\mapsto g_A(x-z)$, $x\in\R_+$, is decreasing whenever $z\in\C\setminus\R$.
If $z\in\R$, then $g_A(x-z)$ is constant (equal to 1) for $x\in[0,z]$ and decreasing
for $x\in(z,\infty)$.
\end{lemma}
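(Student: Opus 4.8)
The plan is to reduce everything to the elementary fact that the principal square root (with cut along $\R_-$, the branch being the one fixed once and for all in the definition of $\zeta$) maps the slit plane $\C\setminus(\R_-\cup\{0\})$ into the open right half-plane. Set $w=x-z$, so that $w$ traces a horizontal half-line as $x$ runs over $\R_+$ and $dw/dx=1$. Whenever $w\notin\R_-\cup\{0\}$ the function $x\mapsto(x-z)^{3/2}$ is holomorphic in the real variable $x$, so
\[
\frac{d}{dx}\,\re\bigl((x-z)^{3/2}\bigr)=\re\Bigl(\tfrac32(x-z)^{1/2}\Bigr)=\tfrac32\,\re\bigl((x-z)^{1/2}\bigr).
\]
Since $g_A(x-z)=\exp\bigl(-\tfrac23\re(x-z)^{3/2}\bigr)$, the monotonic behaviour of $g_A$ is governed entirely by the sign of $\re\bigl((x-z)^{1/2}\bigr)$.

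First I would treat $z\in\C\setminus\R$. Then $\im(x-z)=-\im z\neq0$ for every $x\in\R_+$, so $x-z$ never meets $\R_-\cup\{0\}$; hence $\arg(x-z)\in(-\pi,\pi)$ and $\arg\bigl((x-z)^{1/2}\bigr)=\tfrac12\arg(x-z)\in(-\tfrac{\pi}{2},\tfrac{\pi}{2})$, so that $\re\bigl((x-z)^{1/2}\bigr)>0$ strictly. By the identity above, $x\mapsto\re(x-z)^{3/2}$ is strictly increasing on $\R_+$, and therefore $x\mapsto g_A(x-z)$ is strictly decreasing there.

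For $z\in\R$ there are two regimes. On $x>z$ the point $x-z$ is a positive real number, $(x-z)^{3/2}=\abs{x-z}^{3/2}$ is increasing in $x$, and $g_A(x-z)$ is decreasing directly from its definition (no derivative identity is needed). On $0\le x<z$ the point $x-z$ lies on the cut itself; assigning it argument $\pm\pi$ gives $(x-z)^{3/2}=\abs{x-z}^{3/2}e^{\pm 3\pi i/2}=\mp i\abs{x-z}^{3/2}$, which is purely imaginary, whence $\re(x-z)^{3/2}=0$ and $g_A(x-z)=1$; the endpoint $x=z$ (when it belongs to $\R_+$) follows by continuity, again with value $1$.

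Nothing here is genuinely difficult; the only point that warrants care is the real case on the interval $[0,z]$, where the path of $x-z$ runs exactly along the branch cut, so one must verify directly that $\re\bigl((x-z)^{3/2}\bigr)$ vanishes there rather than appealing to the derivative computation, which is not available on the cut.
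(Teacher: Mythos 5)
Your proof is correct, and it takes a different route from the paper's. The paper parametrizes the point $x-z$ by its argument $\gamma\in(0,\pi)$ (for $z\in\C_-$), writes $\re(x-z)^{3/2}=\abs{\im z}^{3/2}\cos(\tfrac32\gamma)(\sin\gamma)^{-3/2}$, and obtains the monotonicity of $x\mapsto\re(x-z)^{3/2}$ as the composition of two monotone maps: $x\mapsto\gamma$ is decreasing, and the trigonometric quotient is decreasing in $\gamma$ (a fact the paper asserts without computation; it amounts to the identity $\frac{d}{d\gamma}\bigl[\cos(\tfrac32\gamma)(\sin\gamma)^{-3/2}\bigr]=-\tfrac32\cos(\gamma/2)(\sin\gamma)^{-5/2}<0$). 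You instead differentiate directly in $x$ and reduce everything to the single observation that the principal square root maps the slit plane into the open right half-plane, so $\frac{d}{dx}\re(x-z)^{3/2}=\tfrac32\re(x-z)^{1/2}>0$. Your argument is shorter, avoids the unverified trigonometric monotonicity, and yields strict decrease in one stroke; it also treats the real case on $[0,z]$ explicitly (checking that $\re(x-z)^{3/2}=0$ on the branch cut regardless of the choice of argument $\pm\pi$), which the paper dismisses as obvious. The paper's parametrization, on the other hand, gives the explicit formula for $\re(x-z)^{3/2}$ along the horizontal line, which is occasionally useful elsewhere but is not needed for this lemma.
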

\begin{proof}
Suppose $z\in\C_-$. A simple computation shows that, given $x\in\R$, there
exists a unique $\gamma\in(0,\pi)$ such that
\begin{equation*}
x-z= \frac{\abs{\im z}}{\sin\gamma}e^{i\gamma}.
\end{equation*}
Then,
\begin{equation*}
\re(x-z)^{3/2} = \abs{\im z}^{3/2}\frac{\cos\frac32\gamma}{(\sin\gamma)^{3/2}}.
\end{equation*}
The right hand side of the last equation is decreasing
as a function of $\gamma$. But the map $x\mapsto \gamma$ is also decreasing so
the map $x\mapsto \re(x-z)^{3/2}$ is increasing. This in turn implies the assertion.
Clearly, a similar reasoning works if $z\in\C_+$. The statement is obvious for $z\in\R$.
\end{proof}

\section{Main results}
\label{sec:main}

\subsection{Adding a perturbation}

We look for a solution to the eigenvalue equation
$\tau\varphi = z\varphi$, with $q\in L^1(\R_+)$, that is real entire with
respect to the spectral parameter $z\in\C$ and lies in $L^2(\R_+)$.
To this end we introduce the auxiliary function
\begin{equation*}
\omega(z) = \int_0^\infty\frac{\abs{q(x)}}{\sqrt{1+\abs{x-z}}}\,dx.
\end{equation*}
Clearly, $\omega(z)$ is well defined for all $z\in\C$. Moreover,
$\omega(z)$ is well defined under the weaker assumption
$q\in L^1(\R_+,(1+x)^{-1/2}dx)$. However, our hypothesis on $q$ give us control
on the decay of $\omega(z)$ as it is shown next.

\begin{lemma}
\label{rem:maybe-true}
Assume $q\in L^1(\R_+)$. Then $\omega(z)\to 0$ as $z\to\infty$.
\end{lemma}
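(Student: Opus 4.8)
The plan is the standard truncation argument for $L^1$ functions, exploiting that the kernel $1/\sqrt{1+\abs{x-z}}$ is uniformly bounded by $1$ while the tail of $q$ carries little mass. The one point requiring care is that $z$ may escape to infinity along the positive real axis, in which case the kernel does \emph{not} decay pointwise near $x=z$; thus a naive dominated-convergence argument fails and one must instead absorb the dangerous region into the $L^1$-tail estimate.

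First I would fix $\varepsilon>0$ and use $q\in L^1(\R_+)$ to choose $R>0$ such that $\int_R^\infty\abs{q(x)}\,dx<\varepsilon$. Then split
\[
\omega(z)
= \int_0^R\frac{\abs{q(x)}}{\sqrt{1+\abs{x-z}}}\,dx
  + \int_R^\infty\frac{\abs{q(x)}}{\sqrt{1+\abs{x-z}}}\,dx.
\]
For the second integral, bound the kernel by $1$ to get $\int_R^\infty\abs{q(x)}/\sqrt{1+\abs{x-z}}\,dx\le\int_R^\infty\abs{q(x)}\,dx<\varepsilon$, uniformly in $z$.

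Next I would estimate the first integral. For $x\in[0,R]$ one has $\abs{x-z}\ge\abs{z}-R$, so whenever $\abs{z}>R$,
\[
\int_0^R\frac{\abs{q(x)}}{\sqrt{1+\abs{x-z}}}\,dx
\le \frac{1}{\sqrt{1+\abs{z}-R}}\int_0^R\abs{q(x)}\,dx
\le \frac{\norm{q}_{L^1(\R_+)}}{\sqrt{1+\abs{z}-R}}.
\]
This tends to $0$ as $\abs{z}\to\infty$, so there is $M>R$ with the right-hand side below $\varepsilon$ for all $\abs{z}\ge M$. Combining the two bounds gives $\omega(z)<2\varepsilon$ for $\abs{z}\ge M$, and since $\varepsilon$ was arbitrary, $\omega(z)\to0$ as $z\to\infty$. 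The argument is elementary; the only conceptual step is recognizing that the pointwise decay of the kernel is unavailable along $\R_+$, which is precisely what the choice of $R$ via the $L^1$-tail circumvents.
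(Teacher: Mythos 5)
Your proof is correct, and it rests on the same basic idea as the paper's --- truncate at a point $R$ chosen so that $\int_R^\infty\abs{q}\,dx<\varepsilon$, bound the kernel by $1$ on the tail, and show the kernel is uniformly small on the bounded piece $[0,R]$ once $z$ is far away. Where you genuinely diverge is in how that last step is carried out. The paper runs a three-way case analysis on the location of $z$ (large $\abs{\im z}$, very negative $\re z$, large positive $\re z$), reduces the delicate third case to real $\mu=\re z$ via $\omega(z)\le\omega(\re z)$, and then builds an interleaved auxiliary sequence $\{x_k\},\{\mu_k\}$ to certify the existence of a threshold $\mu'$ beyond which the kernel is small on the initial segment. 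Your single observation that the reverse triangle inequality gives
\begin{equation*}
\abs{x-z}\ \ge\ \abs{z}-R \qquad\text{for all } x\in[0,R],\ \abs{z}>R,
\end{equation*}
hence
\begin{equation*}
\int_0^R\frac{\abs{q(x)}}{\sqrt{1+\abs{x-z}}}\,dx\ \le\ \frac{\norm{q}_{L^1(\R_+)}}{\sqrt{1+\abs{z}-R}}\ \longrightarrow\ 0,
\end{equation*}
handles every direction of escape to infinity at once, including complex $z$, and makes the case analysis and the sequence construction unnecessary. What your version buys is brevity and a uniform treatment of all of $\C$; what the paper's version buys is essentially nothing extra here (it does not yield a sharper rate), so your route is the preferable one. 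One small point of completeness: you should note, as you implicitly do by bounding the kernel by $1$, that the tail estimate is uniform in $z\in\C$, so the two bounds combine for every $z$ with $\abs{z}\ge M$ regardless of argument.
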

\begin{proof}
Given $\varepsilon>0$, choose $x_* > 0$ and $\mu_* > x_*$ such that
\[
\int_{x_*}^\infty \abs{q(x)} dx < \frac{\varepsilon}{2}
\quad\text{and}\quad
\frac{1}{\sqrt{\mu_* - x_*}} < \frac{\varepsilon}{2\norm{q}_1}.
\]
Suppose $\abs{\im(z)}>\mu_*$. Then
$\abs{x-z}>\mu_*$ for any $x>0$. Hence,
\begin{equation*}
\frac{1}{\sqrt{1+\abs{x-z}}}
	\le \frac{1}{\sqrt{\mu_*}}
	< \frac{\epsilon}{2\norm{q}_{1}}
\end{equation*}
for all $x\in\R_*$, which in turn implies $\omega(q,z)<\epsilon$.
A similar reasoning applies when $\abs{\im(z)} \le \mu_*$ and $\re(z) < - \mu_*$.
Finally, suppose that $\abs{\im(z)} \le \mu_*$ and $\re(z) > \mu_*$.
Since $\omega(q,z)\le \omega(q,\re(z))$, it suffices to consider $z=\mu\in\R$
with $\mu > \mu_*$. Then,
\begin{equation*}
\omega(q,\mu) 
	< \frac{1}{\sqrt{1+\abs{x_*-\mu_*}}}\int_0^{x_*} \abs{q(x)} dx 
		+ \int_{x_*}^\infty \abs{q(x)} dx
	< \varepsilon.
\end{equation*}
Thus, we have shown that
\(
\omega(q,z) < \varepsilon 
\)
whenever $\abs{\re(z)} + \abs{\im(z)} > \mu_*$.
\end{proof}

In what follows $C$ denotes a generic positive constant.

\begin{proposition}
\label{prop:bad-estimate}
Suppose $q\in L^1(\R_+,(1+x)^{-1/2}dx)$. Then, the eigenvalue
equation $\tau\varphi = z\varphi$ admits a solution $\psi(z,x)$, real entire with respect
to $z$, such that:
\begin{enumerate}[label={(\roman*)}]
\item $\psi(z,x)$ solves the Volterra integral equation
\begin{equation}
\label{eq:volterra-1}
\psi(z,x) = \psi_0(z,x) - \int_x^{\infty}J_0(z,x,y)q(y)\psi(z,y)dy,
\end{equation}
where
\[
J_0(z,x,y) = \psi_0(z,y)\theta_0(z,x)-\psi_0(z,x)\theta_0(z,y),
\]
and satisfies the estimates
\begin{align}
\abs{\psi(z,x)}
	&\le C e^{C\omega(z)}
	\frac{g_A(x-z)}{1+\abs{x-z}^{1/4}}\quad\text{and}\nonumber
\\[1mm]
\abs{\psi(z,x)-\psi_0(z,x)}
	&\le C\omega(z)e^{C\omega(z)}
	\frac{g_A(x-z)}{1+\abs{x-z}^{1/4}}.
	\label{eq:estimate-diff-phi}
\end{align}

\item Moreover, $\psi'(z,x)$ obeys the equation
\begin{equation*}
\psi'(z,x) = \psi_0'(z,x) - \int_x^{\infty}\partial_xJ_0(z,x,y)q(y)\psi(z,y)dy
\end{equation*}
and satisfies the estimates
\begin{equation}
\abs{\psi'(z,x)-\psi'_0(z,x)}
	\le C\omega(z)e^{C\omega(z)}\bigl(1+\abs{x-z}^{1/4}\bigr)g_A(x-z).
	\label{eq:estimate-diff-phi-neumann}
\end{equation}
\end{enumerate}
\end{proposition}

\begin{proof}
For $n\in\N$ define
\begin{equation*}
\psi_n(z,x) = - \int_x^\infty J_0(z,x,y)q(y)\psi_{n-1}(z,y)dy.
\end{equation*}
Then,
\begin{equation}
\label{eq:recursive}
\abs{\psi_n(z,x)}
\le \int_x^\infty \abs{J_0(z,x,y)}\abs{q(y)}\abs{\psi_{n-1}(z,y)}dy.
\end{equation}
Next, we note that
\[J_0(z,x,y)
	= \pm i\left[\psi_0(z,x)\theta_\pm(z,y) - \psi_0(z,y)\theta_\pm(z,x)\right]
\]
(the choice of sign is irrelevant). Then, recalling
\eqref{eq:bound-psi-0} and \eqref{eq:bound-theta-pm}, \eqref{eq:recursive} yields
\begin{multline*}
\abs{\psi_n(z,x)}
\le 2C_0^2\frac{g_A(x-z)}{1+\abs{x-z}^{1/4}}
		\int_x^{\infty}\frac{\abs{q(y)}}{1+\abs{y-z}^{1/4}}g_{\#}(y-z)
		\abs{\psi_{n-1}(z,y)}dy
\\[1mm]
+   2C_0^2\frac{g_{\#}(x-z)}{1+\abs{x-z}^{1/4}}
		\int_x^{\infty}\frac{\abs{q(y)}}{1+\abs{y-z}^{1/4}}g_A(y-z)
		\abs{\psi_{n-1}(z,y)}dy.
\end{multline*}
where $g_{\#}(z):=1/g_A(z)$.
We claim that every $\psi_n(z,x)$ is real entire with respect to the spectral parameter
and satisfies the estimate
\begin{equation}
\label{eq:estimate-diff-phi_n}
\abs{\psi_n(z,x)}
\le \frac{4^n}{n!}C_0^{2n+1} \omega^n(z)\frac{g_A(x-z)}{1+\abs{x-z}^{1/4}}.
\end{equation}
From this it will follow that
\[
\psi(z,x) = \sum_{n=0}^\infty \psi_n(z,x)
\]
converges uniformly on compact subsets of $\C$ to the solution with the desired
properties.

First, consider $n=1$. Then, we have
\begin{multline*}
\abs{\psi_1(z,x)}
\le 2C_0^3\frac{g_A(x-z)}{1+\abs{x-z}^{1/4}}
		\int_x^{\infty}\frac{\abs{q(y)}}{(1+\abs{y-z}^{1/4})^2}g_{\#}(y-z)g_A(y-z)dy
\\[1mm]
+   2C_0^3\frac{g_{\#}(x-z)}{1+\abs{x-z}^{1/4}}
		\int_x^{\infty}\frac{\abs{q(y)}}{(1+\abs{y-z}^{1/4})^2}(g_A(y-z))^2dy.
\end{multline*}
Clearly, $(g_Ag_{\#})(x-z)\equiv 1$. Also, Lemma~\ref{lem:stupid-lemma}
implies $g_A(y-z)\le g_A(x-z)$ for all $y\in[x,\infty)$. Hence,
\begin{multline*}
\abs{\psi_1(z,x)}
\le 2C_0^3\frac{g_A(x-z)}{1+\abs{x-z}^{1/4}}
		\int_x^{\infty}\frac{\abs{q(y)}}{(1+\abs{y-z}^{1/4})^2}dy
\\[1mm]
+   2C_0^3\frac{(g_{\#} g_Ag_A)(x-z)}{1+\abs{x-z}^{1/4}}
		\int_x^{\infty}\frac{\abs{q(y)}}{(1+\abs{y-z}^{1/4})^2}dy,
\end{multline*}
that is,
\[
\abs{\psi_1(z,x)}
\le 4C_0^3\frac{g_A(x-z)}{1+\abs{x-z}^{1/4}}
		\int_x^{\infty}\frac{\abs{q(y)}}{(1+\abs{y-z}^{1/4})^2}dy.
\]

For arbitrary $n\in\N$ we use the identity
\begin{equation}
\label{eq:identity-Poschel}
\int_x^\infty\int_{y_1}^\infty\cdots\int_{y_{n-1}}^\infty\prod_{l=1}^n
h(y_l)\,dy_1\cdots dy_n
	= \frac{1}{n!}\left[\int_x^\infty h(y) dy\right]^n
\end{equation}
to obtain
\begin{equation}
\label{eq:inductive-useful-bound}
\abs{\psi_n(z,x)}
\le \frac{4^n}{n!}C_0^{2n+1}\frac{g_A(x-z)}{1+\abs{x-z}^{1/4}}
		\left[\int_x^{\infty}\frac{\abs{q(y)}}{(1+\abs{y-z}^{1/4})^2}dy\right]^n
\end{equation}
which in turn implies \eqref{eq:estimate-diff-phi_n}. Then (i) follows after a
suitable choice for the constant $C$.

The proof of (ii) is omitted since it arises from an analogous reasoning.
\end{proof}

Clearly, \eqref{eq:estimate-diff-phi} implies that $\psi(z,x)$ so constructed
belongs to the domain of the maximal operator $H$.

The asymptotic analysis of the norming constants depends also on the following estimates.

\begin{proposition}
Suppose $q\in L^1(\R_+)$. Then, $\psi(z,x)$ satisfies
\begin{equation}
\label{prop:another-estimate}
\abs{\dot{\psi}(z,x) - \dot{\psi}_0(z,x)}
	\le C e^{C\norm{q}}
	\left(\bigl(1 + \abs{x-z}^{1/4}\bigr)\omega(z) + \frac{\norm{q}}
	{1 + \abs{x-z}^{1/4}}\right)g_A(x-z).
\end{equation}
Also,
\begin{equation}
\label{prop:last-estimate}
\abs{\dot{\psi}'(z,x) - \dot{\psi}'_0(z,x)}
	\le C e^{C\norm{q}}
	\left(\bigl(1 + \abs{x-z}^{1/4}\bigr)\norm{q}^2
		+ \frac{\abs{x-z}}{1 + \abs{x-z}^{1/4}}\omega(z)\right)g_A(x-z).
\end{equation}
\end{proposition}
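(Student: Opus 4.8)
The plan is to differentiate the Volterra integral equation \eqref{eq:volterra-1} with respect to the spectral parameter $z$ and then estimate the resulting integral equation by Gronwall-type arguments, exactly as in the proof of Proposition~\ref{prop:bad-estimate}, but now keeping track of the extra terms produced by the $z$-derivative. Write $u(z,x)=\dot\psi(z,x)-\dot\psi_0(z,x)$. Differentiating \eqref{eq:volterra-1} gives
\[
u(z,x) = -\int_x^\infty \dot J_0(z,x,y)q(y)\psi(z,y)\,dy
		-\int_x^\infty J_0(z,x,y)q(y)\bigl(\dot\psi_0(z,y)+u(z,y)\bigr)\,dy .
\]
The last term is the ``Volterra part'' that will be absorbed by a Gronwall iteration; the first two are inhomogeneous terms that must be bounded by the right-hand side of \eqref{prop:another-estimate}. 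For the second inhomogeneous term I would use the bound on $\psi_0$ from \eqref{eq:bound-psi-0} together with an analogous bound on $\dot\psi_0(z,x)=-\sqrt\pi\,\ai'(x-z)$; by Lemma~\ref{lem:g_Ag_B} this is controlled by $C\abs{x-z}^{1/4}g_A(x-z)$, and combining with the kernel estimate and $q\in L^1$ produces the $\frac{\norm q}{1+\abs{x-z}^{1/4}}g_A(x-z)$ contribution. For the first term I need a pointwise bound on $\dot J_0(z,x,y)=\dot\psi_0(z,y)\theta_0(z,x)+\psi_0(z,y)\dot\theta_0(z,x)-(\text{same with }x\leftrightarrow y)$; each summand is a product of a ``small'' factor ($\psi_0$ or $\theta_0$) and a factor with an extra $\abs{\cdot-z}^{1/4}$ ($\dot\psi_0$ or $\dot\theta_0$), so after using the exponential cancellation $(g_Ag_\pm)(\cdot)\equiv 1$ and $g_A(y-z)\le g_A(x-z)$ from Lemma~\ref{lem:stupid-lemma}, together with estimate \eqref{eq:estimate-phi} for $\psi(z,y)$, one gets a term of the shape $C e^{C\omega(z)}\abs{x-z}^{1/4}g_A(x-z)\int_x^\infty\frac{\abs{q(y)}}{1+\abs{y-z}^{1/4}}\,dy$, and the remaining integral is exactly $\omega(z)$ up to the exponent on $1+\abs{y-z}$. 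That yields the $\abs{x-z}^{1/4}\omega(z)\,g_A(x-z)$ contribution.

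Having isolated the inhomogeneous term
\[
F(z,x) := C e^{C\omega(z)}\Bigl(\abs{x-z}^{1/4}\omega(z) + \tfrac{\norm q}{1+\abs{x-z}^{1/4}}\Bigr)g_A(x-z),
\]
I would then set up the iteration $u=\sum_{n\ge 0} u_n$ with $u_0=$ the inhomogeneous part and $u_{n}=-\int_x^\infty J_0(z,x,y)q(y)u_{n-1}(z,y)\,dy$, and show inductively, using the same splitting of $J_0$ via $\theta_\pm$ and the identity \eqref{eq:identity-Poschel}, that $\abs{u_n(z,x)}\le \frac{(C\omega(z))^n}{n!}\,F(z,x)$ (the point being that $F(z,x)$ carries the same $g_A(x-z)/(1+\abs{x-z}^{1/4})$- or $\abs{x-z}^{1/4}g_A(x-z)$-type profile as the kernel estimates can propagate). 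Summing the series gives $\abs{u(z,x)}\le e^{C\omega(z)}F(z,x)$, and since $\omega(z)\le\norm q$ pointwise (because $(1+\abs{x-z})^{-1/2}\le 1$) the prefactor $e^{C\omega(z)}$ can be replaced by $e^{C\norm q}$, giving \eqref{prop:another-estimate}. Estimate \eqref{prop:last-estimate} for $\dot\psi'$ follows by differentiating instead the equation for $\psi'(z,x)$ in part (ii) of Proposition~\ref{prop:bad-estimate}; the only new feature is that $\partial_x J_0$ and $\partial_x\dot J_0$ carry an extra factor of order $\abs{x-z}^{1/4}$ on the ``$x$-leg,'' which produces the $\abs{x-z}^{1/4}$ and $\abs{z-x}/(1+\abs{x-z}^{1/4})$ weights in \eqref{prop:last-estimate}, and that one must feed in \eqref{eq:estimate-diff-phi-neumann} as well as \eqref{eq:estimate-phi}.

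The main obstacle I anticipate is not the Gronwall iteration itself but the bookkeeping of the polynomial weights: one has to check that differentiating in $z$ costs at most one factor of $\abs{x-z}^{1/4}$ per Airy-type factor (this is where a derivative analogue of Lemma~\ref{lem:g_Ag_B}, namely $\abs{\ai'(z)}\le C\abs z^{1/4}g_A(z)$ and a companion bound $\abs{\ai''(z)}=\abs{z\,\ai(z)}\le C\abs z^{3/4}g_A(z)$ via the Airy equation, is essential), and that these weights combine correctly when $J_0$ is split across its two legs so that the final bound has precisely the stated weights $\abs{x-z}^{1/4}$ and $\abs{z-x}/(1+\abs{x-z}^{1/4})$. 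A secondary subtlety is justifying termwise differentiation of the series $\psi=\sum_n\psi_n$ — but this is immediate from the uniform-on-compacts convergence already established in Proposition~\ref{prop:bad-estimate} together with the fact that each $\psi_n$ is entire in $z$, so the interchange of $\partial_z$ and $\sum_n$ is legitimate and the differentiated series converges uniformly on compacts by the very estimate we are proving.
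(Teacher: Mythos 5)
Your proposal is correct and follows essentially the same route as the paper: differentiate the Volterra equation \eqref{eq:volterra-1} in $z$, bound the inhomogeneous terms using \eqref{eq:bound-psi-0}, \eqref{eq:bound-theta-pm}, Lemma~\ref{lem:stupid-lemma} and the derivative bounds from Lemma~\ref{lem:g_Ag_B}, then run the Neumann/Picard iteration with identity \eqref{eq:identity-Poschel} and sum the series. The paper merely organizes the iteration slightly differently (its $\eta_k$ recursion couples $\dot\psi_k$ to $\psi_{k-1}$ rather than lumping the inhomogeneous terms into $u_0$), which is a cosmetic distinction.
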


\begin{proof}
From \eqref{eq:volterra-1} we see that $\dot{\psi}(z,x)$ is a solution to the integral
equation
\[
\dot{\psi}(z,x) = \dot{\psi}_0(z,x)
	- \int_{x}^{\infty} \partial_z J_0(z,x,y) q(y) \psi(z,y) dy
	- \int_{x}^{\infty} J_0(z,x,y) q(y) \dot{\psi}(z,y) dy.
\]
Let $\{\eta_k(z,x)\}_{k\in\N}$, be solutions to the recursive equation
\[
\eta_k(z,x) =
	- \int_{x}^{\infty} \partial_z J_0(z,x,y) q(y) \psi_{k-1}(z,y) dy
	- \int_{x}^{\infty} J_0(z,x,y) q(y) \eta_{k-1}(z,y) dy,
\]
where $\{\psi_k(z,x)\}_{k\in\N}$ are defined in the proof of
Proposition~\ref{prop:bad-estimate} and $\eta_0(z,x) := \dot{\psi}_0(z,x)$. Using
induction one can show that
\begin{multline*}
\abs{\eta_k(z,x)}
	\le \frac{4^k C_0^{2k+1}}{k!}
	\left(\bigl(1 + \abs{x-z}^{1/4}\bigr)
	\left(\int_x^\infty\frac{\abs{q(y)}}{(1+\abs{y-z}^{1/4})^2}dy\right)^k\right.
	\\
	+\left. \frac{2k}{1+\abs{x-z}^{1/4}}\left(\int_x^\infty\abs{q(y)}dy\right)^k\right)
	g_A(x-z),
\end{multline*}
hence
\[
\abs{\eta_k(z,x)}
	\le \frac{4^k C_0^{2k+1}}{k!}
	\left(\bigl(1 + \abs{x-z}^{1/4}\bigr)
	\omega(z)^k
	+ \frac{2k}{1+\abs{x-z}^{1/4}}\norm{q}^k\right)
	g_A(x-z).
\]
It follows that
\[
\dot{\psi}(z,x) = \sum_{k=0}^\infty \eta_k(z,x)
\]
(the convergence being uniform on compact subsets of $\C$) which in turn implies the assertion.
The proof of the second inequality follows from an analogous reasoning.
\end{proof}

\subsection{Dirichlet boundary condition}

Define the contours
\begin{equation*}
\cE^m := \left\{z\in\C: \abs{\zeta} = \bigl(m+\tfrac14\bigr)\pi\right\},
\quad
\cE_k := \left\{z\in\C: \abs{\zeta - \bigl(k-\tfrac14\bigr)\pi}=\tfrac{\pi}{2}\right\},
\quad
m,k\in\N.
\end{equation*}
In view of \eqref{eq:zeros-airy}, every $\cE_k$ encloses one and only one zero
of $\ai(-\lambda)$, at least for $k$ sufficiently large.

\begin{lemma}
\label{lem:bound-for-gA}
There exists $m_0, k_0\in\N$ such that, for every $m\ge m_0$ and $k\ge k_0$, the
following statement holds true:
\begin{equation}
\label{eq:bound-for-gA}
\frac{g_A(-z)}{1+\abs{z}^{1/4}}
	< 8\sqrt{\pi} \abs{\text{\rm Ai}(-z)},
\end{equation}
whenever $z\in\cE^m$ or $z\in\cE_k$.
\end{lemma}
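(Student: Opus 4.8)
The plan is to reduce the inequality to a statement about how far the argument $-z$ stays from the zeros of $\ai$, and then to exploit the asymptotic expansion \eqref{eq:ai-asymp-negative} on the contours. Write $\zeta = \tfrac23(-z)^{3/2}$, so that on $\cE^m$ we have $\abs{\zeta}=(m+\tfrac14)\pi$, while on $\cE_k$ we have $\zeta$ within distance $\pi/2$ of $(k-\tfrac14)\pi$. On both families of contours $\abs{z}\to\infty$ uniformly as $m,k\to\infty$, and one checks that the sector constraint $\abs{\arg(-z)}\le\frac{2\pi}{3}-\delta$ needed for \eqref{eq:ai-asymp-negative} is met for all large $m,k$ (the contours, being circles of growing radius in the $\zeta$-plane centered on the real axis, stay within a fixed sub-sector of the right half-plane when pulled back to the $z$-plane). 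Hence on these contours
\[
\ai(-z) = \frac{1}{\sqrt{\pi}\,(-z)^{1/4}}\left[\sin\!\left(\zeta+\tfrac{\pi}{4}\right) + O\!\left(\zeta^{-1}e^{\abs{\im\zeta}}\right)\right].
\]

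The key step is a lower bound on $\bigl|\sin(\zeta+\tfrac\pi4)\bigr|$ valid on both contour families. Recall $\abs{\sin w}^2 = \sin^2(\re w) + \sinh^2(\im w)$, so $\abs{\sin w} \ge \tfrac12 e^{\abs{\im w}}$ whenever $\abs{\im w}\ge 1$, say, and otherwise $\abs{\sin w}\ge c\,\mathrm{dist}(\re w, \pi\Z)$ for a fixed $c>0$. On $\cE_k$, $\re\zeta$ is confined near $(k-\tfrac14)\pi$, so $\re(\zeta+\tfrac\pi4)$ is bounded away from $\pi\Z$ by roughly $\pi/4$, giving a uniform lower bound $\abs{\sin(\zeta+\tfrac\pi4)}\ge c' e^{\abs{\im\zeta}}$ after combining the two regimes (small vs.\ large $\abs{\im\zeta}$). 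On $\cE^m$, the circle $\abs{\zeta}=(m+\tfrac14)\pi$ can come close to the real axis, but there $\re\zeta\approx \pm(m+\tfrac14)\pi$, which is at distance $\pi/4$ from $\pi\Z$; away from the real axis $\abs{\im\zeta}$ is large and the exponential lower bound takes over. So in all cases $\abs{\sin(\zeta+\tfrac\pi4)}\ge c' e^{\abs{\im\zeta}}$ for a fixed $c'>0$ and all sufficiently large $m,k$. Since the error term is $O(\zeta^{-1}e^{\abs{\im\zeta}})$, it is absorbed by the main term once $\abs{\zeta}$ is large, yielding $\abs{\ai(-z)} \ge \frac{c''}{\abs{z}^{1/4}} e^{\abs{\im\zeta}}$ on the contours.

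Finally I would compare this with $g_A(-z)/(1+\abs{z}^{1/4})$. By definition $g_A(-z) = e^{-\frac23\re(-z)^{3/2}} = e^{-\re\zeta}$, and $\abs{e^{-\re\zeta}} \le e^{\abs{\re\zeta}} \le e^{\abs{\zeta}}$; more to the point we only need $e^{-\re\zeta}\le e^{\abs{\zeta}}$, but the sharper observation is that on the relevant sector $-\re\zeta \le \abs{\im\zeta}$ fails in general, so instead one keeps track of both: actually $g_A(-z) = e^{-\re\zeta} \le e^{|\re\zeta|}$, and we must compare $e^{|\re\zeta|}$ against $e^{|\im\zeta|}$. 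The clean route is to note that the \emph{stated} inequality only needs $g_A(-z)/(1+\abs z^{1/4}) < 8\sqrt\pi\,\abs{\ai(-z)}$, i.e.\ after the lower bound above it suffices to have $e^{-\re\zeta} \le C\, e^{\abs{\im\zeta}}(1+\abs z^{1/4})/\abs z^{1/4}$ with the constant working out to give $8\sqrt\pi$; since $(-z)^{3/2}$ has argument in a compact sub-sector of $(-\pi,\pi)$ bounded away from $\pm\pi$, one has $-\re\zeta \le \abs{\zeta}\cos(\text{something})$, and crucially $-\re\zeta \le \abs{\im\zeta} + O(1)$ only when $\arg\zeta$ is near $\pm\pi/2$; for $\zeta$ near the positive real axis $-\re\zeta$ is very negative so $g_A$ is exponentially small and the bound is trivial, while for $\zeta$ near the negative real axis $\cE^m$ avoids that region for the sub-sector reason. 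Combining the cases and choosing $m_0,k_0$ large enough to absorb all the $O(1)$ and $O(\zeta^{-1})$ terms into the factor $8$ (starting from the $\tfrac12$ in $\abs{\sin}\ge\tfrac12 e^{\abs{\im w}}$ and the $\sqrt\pi$ normalization) gives \eqref{eq:bound-for-gA}.

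\textbf{Main obstacle.} The delicate point is the uniform lower bound on $\bigl|\sin(\zeta+\tfrac\pi4)\bigr|$ on $\cE^m$: unlike $\cE_k$, these large circles are not confined near a single value of $\re\zeta$, so one must carefully split the circle into an arc near the real $\zeta$-axis (where $\re\zeta\approx\pm(m+\tfrac14)\pi$ is safely away from $\pi\Z$ by the arithmetic of the radius, and the error term is controlled by $\abs\zeta^{-1}$ alone) and the complementary arc (where $\abs{\im\zeta}$ is bounded below by a quantity growing with $m$, so the exponential dominates). Getting the geometry of this split right, and simultaneously verifying the sector condition $\abs{\arg(-z)}\le\frac{2\pi}{3}-\delta$ holds on the chosen arcs, is the crux; everything else is bookkeeping of constants to land on $8\sqrt\pi$.
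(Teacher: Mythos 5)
Your treatment of the circles $\cE_k$ is essentially the paper's: there $\zeta=\tfrac23 z^{3/2}$ stays within distance $\pi/2$ of $(k-\tfrac14)\pi$, so $z$ is confined to a thin neighbourhood of the positive real axis, the oscillatory expansion \eqref{eq:ai-asymp-negative} applies, and everything reduces to $e^{\abs{\im\zeta}}\le 4\abss{\sin(\zeta+\tfrac{\pi}{4})}$, which the paper imports from P\"oschel--Trubowitz ($\abs{w-n\pi}\ge\pi/4$ for all $n\in\Z$ implies $e^{\abs{\im w}}<4\abs{\sin w}$). Your two-regime substitute for that lemma is not airtight as stated: at the top and bottom of $\cE_k$ one has $\re(\zeta+\tfrac{\pi}{4})\in\pi\Z$ exactly while $\abs{\im\zeta}=\pi/2<1$, so neither of your two bounds gives anything there. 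This is fixable by measuring the distance to $\pi\Z$ in $\C$ rather than separately in the real and imaginary directions, which is precisely what the cited lemma does.

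The genuine gap is on $\cE^m$. Since $\abs{\zeta}=\tfrac23\abs{z}^{3/2}$, the contour $\cE^m$ is the \emph{full} circle $\abs{z}=\bigl(\tfrac32(m+\tfrac14)\pi\bigr)^{2/3}$ in the $z$-plane; it does not stay in a sub-sector of the right half-plane. (Note also that the sector condition in \eqref{eq:ai-asymp-negative} is on $\arg(z)$, not on $\arg(-z)$; this sign slip seems to be what leads you to believe the contour avoids the bad region.) On the arc where $\abs{\arg z}>\pi/3$, equivalently $\abs{\arg(-z)}\le\tfrac{2\pi}{3}$, the oscillatory expansion you rely on is invalid and $\ai(-z)$ is governed by the monotone expansion \eqref{eq:ai-asymp-positive}. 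There both $\abs{\ai(-z)}$ and $g_A(-z)=e^{-\re\eta}$ with $\eta=\tfrac23(-z)^{3/2}$ are of the \emph{same} exponential order $e^{-\re\eta}$, so your claim that ``$g_A$ is exponentially small and the bound is trivial'' does not hold: the right-hand side of \eqref{eq:bound-for-gA} is exponentially small in exactly the same way, and the inequality is a genuine two-sided comparison of quantities with ratio of order one. The paper closes this case separately by writing $\abs{\ai(-z)}\ge \frac{e^{-\re\eta}}{2\sqrt{\pi}\abs{z}^{1/4}}\,(1-\abs{W_1(-z)})$ and showing $1-\abs{W_1(-z)}\ge 1/4$ for large $m$, which is where the constant $8\sqrt{\pi}$ comes from on that arc. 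Without this second case your argument does not establish \eqref{eq:bound-for-gA} on all of $\cE^m$, and that is the part of the lemma needed to run Rouch\'e's theorem on the large circles and count all zeros of $\psi$.
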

\begin{proof}
Let us begin by recalling \eqref{eq:ai-asymp-positive} and \eqref{eq:ai-asymp-negative}
in more precise terms:
\begin{align}
\label{eq:ai-also-detailed}
\ai(z)
	= \frac{e^{-\zeta}}{2\sqrt{\pi}z^{1/4}}
		\left[1+W_1(z)\right],
		\quad \abs{\arg(z)}\le\tfrac{2\pi}{3},
		\quad \abs{z}\ge 1,
\\[1mm]
\label{eq:ai-detailed}
\ai(-z)
	= \frac{1}{\sqrt{\pi}z^{1/4}}
		\left[\sin\left(\zeta+\tfrac{\pi}{4}\right)+W_2(z)\right],
		\quad \abs{\arg(z)}\le\tfrac{\pi}{3},
		\quad \abs{z}\ge 1,
\end{align}
where the functions $W_1(z)$ and $W_2(z)$ satisfy
\begin{align}
\abs{\frac{W_1(z)}{\zeta^{-1}}}\le D_1,
		\quad \abs{\arg(z)}\le\tfrac{2\pi}{3},
		\quad \abs{z}\ge 1,\nonumber
\\[1mm]
\abs{\frac{W_2(z)}{\zeta^{-1}e^{\abs{\im\zeta}}}}\le D_2,
		\quad \abs{\arg(z)}\le\tfrac{\pi}{3},
		\quad \abs{z}\ge 1.\label{eq:w2}
\end{align}

There exists $k_0\in\N$ such that, for all $k\ge k_0$, $z\in\cE_k$ implies
$\re z\ge 1$ and $\arg(z)\in(-\frac{\pi}{3},\frac{\pi}{3})$ so
$\arg(-z)\in(-\pi,-\frac{2\pi}{3})\cup(\frac{2\pi}{3},\pi]$. Since in this case
$\abss{\im z^{3/2}}=-\re (-z)^{3/2}$, one has
\begin{equation*}
\frac{g_A(-z)}{1+\abs{z}^{1/4}}
	=   \frac{e^{\abs{\im\zeta}}}{1+\abs{z}^{1/4}}
	\le \frac{e^{\abs{\im(\zeta+\frac{\pi}{4})}}}{\abs{z}^{1/4}}
\end{equation*}
for all $z\in\cE_k$ and $k\ge k_0$. By a well-known result
(see \cite[Ch.~2, Lemma 1]{poeschel}),
\begin{equation*}
\abs{w-n\pi}\ge \frac{\pi}{4} \implies e^{\abs{\im w}}<4\abs{\sin w}
\end{equation*}
for all integer $n$. Hence,
\begin{equation}
\label{eq:bound-1}
\frac{g_A(-z)}{1+\abs{z}^{1/4}}
	< 4 \frac{\abs{\sin(\zeta+\frac{\pi}{4})}}{\abs{z}^{1/4}}
\end{equation}
for all $z\in\cE_k$ and $k\ge k_0$.
On the other hand, since $\abs{\sin(\zeta+\tfrac{\pi}{4})}\ge d>0$ for all $z\in\cE_k$
and $k\ge k_0$, \eqref{eq:ai-detailed} implies
\begin{equation*}
\abs{\ai(-z)}
	\ge \frac{\abs{\sin(\zeta+\frac{\pi}{4})}}{\sqrt{\pi}\abs{z}^{1/4}}
	\abs{1-\frac{\abs{W_2(z)}}{\abs{\sin(\zeta+\tfrac{\pi}{4})}}}.
\end{equation*}
However,
\begin{equation*}
\frac{\abs{W_2(z)}}{\abs{\sin(\zeta+\tfrac{\pi}{4})}}
	\le \frac{e^{\abs{\im\zeta}}}{\abs{\zeta}}\frac{D_2}{d},
\end{equation*}
and note that $\abs{\im\zeta}\le \pi/2$ if $z\in\cE_k$.
Thus, by increasing $k_0$ if necessary, we have
\begin{equation*}
\abs{\ai(-z)}
	\ge \frac{\abs{\sin(\zeta+\frac{\pi}{4})}}{2\sqrt{\pi}\abs{z}^{1/4}},
\end{equation*}
for all $z\in\cE_k$ with $k\ge k_0$.

The proof concerning $\cE^m$ is analogous: Suppose $m_0=k_0$. Then, by the previous
argument, \eqref{eq:bound-for-gA} holds for $z\in\cE^m$ within the sector
$\arg(z)\in[-\frac{\pi}{3},\frac{\pi}{3}]$, for $m\ge m_0$. Within the sector
$\arg(-z)\in[-\frac{2\pi}{3},\frac{2\pi}{3}]$, we have ($\eta:=\frac23(-z)^{3/2}$)
\begin{equation*}
\frac{g_A(-z)}{1+\abs{z}^{1/4}}
	\le \frac{e^{-\re\eta}}{\abs{z}^{1/4}}
\end{equation*}
and, due to \eqref{eq:ai-also-detailed},
\begin{equation*}
\abs{\ai(-z)}
	\ge \frac{e^{-\re\eta}}{2\sqrt{\pi}\abs{z}^{1/4}}
		\abs{1-\abs{W_1(-z)}}.
\end{equation*}
Finally, using \eqref{eq:w2} ---and increasing $m_0$ if required---,
we have $1-\abs{W_1(-z)}\ge 1/4$ whenever $\abs{z}\ge m_0$.
\end{proof}

\begin{theorem}
\label{thm-dirichlet-eigenvalues}
Suppose $q\in L^1(\R_+)$. Then,
the eigenvalues of $H^D$ satisfy
\[
\lambda^D_k = \left(\tfrac32\pi\bigl(k-\tfrac14\bigr)\right)^{2/3}
			\left(1 + O(k^{-1})\right),\quad k\to\infty.
\]
\end{theorem}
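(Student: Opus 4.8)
The eigenvalues of $H^D$ coincide with the zeros of the entire function $z\mapsto\psi(z,0)$, where $\psi$ is the solution constructed in Proposition~\ref{prop:bad-estimate}; since $H^D$ is self-adjoint these zeros are real and simple. My plan is to compare them with the zeros of $\psi_0(z,0)=\sqrt\pi\,\ai(-z)$ --- which by \eqref{eq:zeros-airy} are the points $-a_k$ with $\tfrac23(-a_k)^{3/2}=(k-\tfrac14)\pi+O(k^{-1})$ --- by applying Rouché's theorem on the contours $\cE^m$ and $\cE_k$. Evaluating \eqref{eq:estimate-diff-phi} at $x=0$ gives
\[
\abs{\psi(z,0)-\psi_0(z,0)}\ \le\ C\,\omega(z)\,e^{C\omega(z)}\,\frac{g_A(-z)}{1+\abs z^{1/4}},
\qquad z\in\C.
\]

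The first step is to check that $\abs{\psi(z,0)-\psi_0(z,0)}<\abs{\psi_0(z,0)}$ on $\cE^m$ and on $\cE_k$ for all sufficiently large $m$ and $k$. This follows by combining three ingredients: (i) $\omega(z)\le\norm{q}_{L^1}$, so $e^{C\omega(z)}\le e^{C\norm{q}_{L^1}}$ is bounded; (ii) Lemma~\ref{rem:maybe-true}, which yields $\omega(z)\to0$ as $z\to\infty$, hence along the contours (all of whose points escape to infinity as $m,k\to\infty$) the factor $C\omega(z)e^{C\omega(z)}$ eventually drops below $\tfrac18$; and (iii) Lemma~\ref{lem:bound-for-gA}, which gives $\dfrac{g_A(-z)}{1+\abs z^{1/4}}<8\sqrt\pi\,\abs{\ai(-z)}=8\,\abs{\psi_0(z,0)}$ on $\cE^m$ and $\cE_k$. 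The product of (ii) and (iii) produces the desired strict inequality, and in particular $\psi(\cdot,0)$ has no zeros on these contours.

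By Rouché's theorem, $\psi(\cdot,0)$ and $\psi_0(\cdot,0)$ then have the same number of zeros, counted with multiplicity, inside $\cE^m$, and likewise inside each $\cE_k$. Writing $r_m:=\bigl(\tfrac32(m+\tfrac14)\pi\bigr)^{2/3}$ one checks from \eqref{eq:zeros-airy} that $\cE^m$ is the circle $\abs z=r_m$ and encloses exactly the first $m$ zeros $-a_1,\dots,-a_m$, while each $\cE_k$ encloses exactly the single zero $-a_k$. Hence, for $m$ large, the open disk $\abs z<r_m$ contains exactly $m$ eigenvalues of $H^D$; being real and increasing, these are precisely $\lambda_1^D<\dots<\lambda_m^D$, and since no eigenvalue lies on $\cE^m$ we get $\lambda_m^D<r_m<\lambda_{m+1}^D$. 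Relabelling, $r_{k-1}<\lambda_k^D<r_k$ for all large $k$, i.e. $(k-\tfrac34)\pi<\tfrac23(\lambda_k^D)^{3/2}<(k+\tfrac14)\pi$ (equivalently, the unique eigenvalue inside $\cE_k$ is $\lambda_k^D$). Consequently $\tfrac23(\lambda_k^D)^{3/2}=(k-\tfrac14)\pi+O(1)$, so $(\lambda_k^D)^{3/2}=\tfrac32\pi(k-\tfrac14)\bigl(1+O(k^{-1})\bigr)$ and therefore $\lambda_k^D=\bigl(\tfrac32\pi(k-\tfrac14)\bigr)^{2/3}\bigl(1+O(k^{-1})\bigr)$, as claimed; the finitely many small eigenvalues not trapped by a large contour do not affect the asymptotics.

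The analytic content is carried entirely by Proposition~\ref{prop:bad-estimate}, Lemma~\ref{rem:maybe-true} and Lemma~\ref{lem:bound-for-gA}; the delicate point is the bookkeeping in the counting step. One must verify that $\cE^m$ is a genuine Jordan curve enclosing precisely the first $m$ zeros of $\ai(-\cdot)$ --- the branch cut of $\zeta=\tfrac23z^{3/2}$ along $\R_-$ does not interfere since all the relevant zeros lie on $\R_+$ --- that consecutive $\cE_k$ do not overlap so as to double-count a zero (in the $\zeta$-variable they are at worst tangent), and that turning on the perturbation cannot move a zero across a contour, which is exactly what the strict Rouché inequality of the first step rules out once $m_0,k_0$ are taken large enough. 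I expect this combinatorial part, rather than any single estimate, to be the main obstacle.
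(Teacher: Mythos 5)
Your argument is correct and rests on the same pillars as the paper's proof: the bound \eqref{eq:estimate-diff-phi} evaluated at $x=0$, Lemma~\ref{rem:maybe-true} to make $\omega$ small along the contours, Lemma~\ref{lem:bound-for-gA} to upgrade this to the Rouch\'e inequality $\abs{\psi(z,0)-\psi_0(z,0)}<\abs{\psi_0(z,0)}$ on $\cE^m$ and $\cE_k$, and the counting/indexing step you describe (the paper treats the finitely many low-lying eigenvalues by enlarging $K$ until $\cE^{K}$ encloses all negative zeros of $\psi(\cdot,0)$, which is the precise version of your closing remark about ``small eigenvalues not trapped by a large contour''). The one genuine difference is in how the quantitative localization is obtained. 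The paper does not read it off the contour $\cE_k$; instead it runs an additional real-variable argument, using the mean value theorem together with the lower bound $\abs{\dot{\psi}_0(-a_k+c)}\ge\tfrac12(\tfrac32\pi k)^{1/6}$ to show that $\abs{\psi-\psi_0}<\abs{\psi_0}$ at the endpoints of the interval $(-a_k-\delta_k,\,-a_k+\delta_k)$ with $\delta_k=4(\tfrac32\pi k)^{-1/3}$, so that $\psi(\cdot,0)$ changes sign there and a real zero is produced by hand within $\delta_k$ of $-a_k$. Your shortcut --- deducing $\abss{\tfrac23(\lambda_k^D)^{3/2}-(k-\tfrac14)\pi}<\tfrac{\pi}{2}$ directly from membership in $\cE_k$ and observing that an $O(1)$ error in $\zeta$ is an $O(k^{-1})$ relative error in $\lambda_k^D$ --- is legitimate and yields a localization of the same order ($O(k^{-1/3})$ in the $\lambda$ variable), so nothing is lost; it does, however, lean on the prior observation that the zeros of $\psi(\cdot,0)$ are exactly the (real, simple) eigenvalues of the self-adjoint operator $H^D$, a fact the paper's interval argument does not need at that point. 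Either route delivers the stated $O(k^{-1})$.
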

\begin{proof}
Abbreviate
\begin{equation*}
\psi_0(z) := \psi_0(z,0),\quad  \psi(z) := \psi(z,0), \quad \mu_k:=-a_k.
\end{equation*}
Since $\sup_{z\in\C}\omega(z)<\infty$, Proposition~\ref{prop:bad-estimate} yields
\begin{equation*}
\abs{\psi(z)-\psi_0(z)}
	\le C \omega(z)\frac{g_A(-z)}{1+\abs{z}^{1/4}},
\end{equation*}
after redefining the constant $C$. Due to Lemma~\ref{rem:maybe-true}, there exists
$k_1\in\N$ such that $\omega(z)\le (8C)^{-1}$ whenever
$\abs{z}\ge(\frac32 \pi (k_1+\frac14))^{2/3}$. Then, by
Lemma~\ref{lem:bound-for-gA}, there exists $k_2\ge k_1$ such that
\begin{equation}
\label{eq:rouche}
\abs{\psi(z) - \psi_0(z)} < \abs{\psi_0(z)}
\end{equation}
for all $z\in\cE^{k_2}$; $k_2$ can be assumed large enough so $\cE^{k_2}$ encloses
all the (finitely many) negative zeros of $\psi(z)$.
Increase $k_2$ (if necessary) to ensure that \eqref{eq:rouche}
holds true for $z$ on every contour $\cE_{n}$ whenever $n\ge n_2$.
Then, in view of Rouché's theorem, we obtain
\[
\abs{\tfrac23(\lambda_k^D)^{3/2} - \tfrac23(-a_k)^{3/2}} \le \pi
\]
for sufficiently large $k$, whence the asymptotics for the eigenvalues follows.
\end{proof}

\begin{theorem}
\label{thm:norming-constant-dirichlet}
Suppose $q\in L^1(\R_+)$. Then the Dirichlet norming constants $\nu_k^D$ satisfies
\[
\frac{1}{\nu_k^D} = 1 + o(1)
\]
as $k\to\infty$.
\end{theorem}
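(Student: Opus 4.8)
The plan is to use the formula $1/\nu_k^D = -\dot\psi(\lambda_k^D,0)/\psi'(\lambda_k^D,0)$ recorded in Section~\ref{sec:preliminaries} and show that the right-hand side converges to $1$ by comparing both the numerator and denominator with their unperturbed counterparts at the point $\lambda_k^D$, using the estimates established in Propositions of the previous subsection. Since $1/\nu_{0,k}^D = 1$ exactly, it suffices to show $\dot\psi(\lambda_k^D,0) = \dot\psi_0(\mu_k,0)(1+o(1))$ and $\psi'(\lambda_k^D,0) = \psi_0'(\mu_k,0)(1+o(1))$, together with control of the discrepancy between $\lambda_k^D$ and $\mu_k = -a_k$.

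First I would write $\lambda_k^D = \mu_k + \varepsilon_k$, where by Theorem~\ref{thm-dirichlet-eigenvalues} (more precisely by the Rouché argument in its proof, which localizes $\lambda_k^D$ inside $\cE_k$, equivalently inside an interval of length $O(\delta_k)=O(k^{-1/3})$ around $\mu_k$) we have $\varepsilon_k\to 0$; in fact $|\varepsilon_k|\le\delta_k$. Then I would control the unperturbed quantities at $z=\lambda_k^D$: from \cite[\S 9.7.10]{nist} one has $\psi_0'(z,0) = -\sqrt\pi\,\ai'(-z)$ and $\dot\psi_0(z,0) = -\sqrt\pi\,\ai'(-z)$ as well — wait, more carefully, $\dot\psi_0(z,0) = \sqrt\pi\,\ai'(-z)$ up to sign, so these two agree up to sign and in particular $-\dot\psi_0(\lambda_k^D,0)/\psi_0'(\lambda_k^D,0)$ is exactly $\pm1$; combined with the asymptotics \eqref{eq:asymp-deriv} one sees both are comparable to $(\tfrac32\pi k)^{1/6}$ in modulus and do not vanish near $\mu_k$, precisely as in \eqref{eq:bound-derivative}. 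So the denominator $\psi_0'(\lambda_k^D,0)$ is bounded below by $\tfrac12(\tfrac32\pi k)^{1/6}$ for large $k$.

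Next I would insert the perturbation estimates. From \eqref{eq:estimate-diff-phi-neumann} with $x=0$, $|\psi'(z,0)-\psi_0'(z,0)| \le C\omega(z)e^{C\omega(z)}|z|^{1/4}$, and since $\omega(\lambda_k^D)\to 0$ by Lemma~\ref{rem:maybe-true}, the correction is $o(|z|^{1/4}) = o((\tfrac32\pi k)^{1/6})$, hence negligible relative to the lower bound on $\psi_0'(\lambda_k^D,0)$; thus $\psi'(\lambda_k^D,0) = \psi_0'(\lambda_k^D,0)(1+o(1))$. For the numerator, from \eqref{prop:another-estimate} with $x=0$ and $z=\lambda_k^D>0$ (so $g_A(-z)=1$), $|\dot\psi(z,0)-\dot\psi_0(z,0)| \le Ce^{C\norm{q}}\bigl(|z|^{1/4}\omega(z) + 2\norm{q}(1+|z|^{1/4})^{-1}\bigr)$; the first term is $o(|z|^{1/4})$ again by $\omega\to 0$, and the second is $O(|z|^{-1/4})$, so both are $o((\tfrac32\pi k)^{1/6})$ and therefore $\dot\psi(\lambda_k^D,0) = \dot\psi_0(\lambda_k^D,0)(1+o(1))$. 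Finally I would handle the mismatch between evaluating at $\lambda_k^D$ versus at $\mu_k$: since $\psi_0'(z,0)$ and $\dot\psi_0(z,0)$ are smooth in $z$ with derivatives of controlled size (one can use the Airy equation $\ai''(w)=w\,\ai(w)$ to estimate $\partial_z$ of these quantities, getting a factor $O(|z|^{5/4})$ over a window of width $O(k^{-1/3})$, i.e. a relative error $O(k^{-1/3+\text{stuff}})$ — one must check this is $o(1)$ relative to the size $(\tfrac32\pi k)^{1/6}$), the ratio $-\dot\psi_0(\lambda_k^D,0)/\psi_0'(\lambda_k^D,0)$ stays within $o(1)$ of the exact value $1$ attained in the limit. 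Combining the three displays gives $1/\nu_k^D = -\dot\psi(\lambda_k^D,0)/\psi'(\lambda_k^D,0) = 1 + o(1)$.

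The main obstacle I anticipate is the bookkeeping at the point $\lambda_k^D$ rather than at $\mu_k$: one needs that replacing the argument $\mu_k$ by $\lambda_k^D$ (which differs by $O(\delta_k)$) does not destroy the comparison, and this requires either a derivative estimate on $\ai'(-z)$ over the relevant window or, more cleanly, redoing the asymptotic expansion \eqref{eq:asymp-deriv} directly with $c = c_k := \lambda_k^D - \mu_k$ allowed to vary with $k$ (bounded, $c_k\to 0$), checking the error terms remain uniform — which they do, since the NIST expansion is uniform on $|\arg| \le 2\pi/3-\delta$ and $c_k$ stays in a compact set. The rest is routine substitution of the already-proven estimates.
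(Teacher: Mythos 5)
Your proposal follows essentially the same route as the paper: it uses the identity $1/\nu_k^D=-\dot\psi(\lambda_k^D,0)/\psi'(\lambda_k^D,0)$, divides numerator and denominator by $\sqrt{\pi}\ai'(-\lambda_k^D)$ (the paper's $\Delta_1,\Delta_2$), and kills both relative errors via \eqref{eq:estimate-diff-phi-neumann}, \eqref{prop:another-estimate}, the decay $\omega(\lambda_k^D)\to0$ from Lemma~\ref{rem:maybe-true}, and the lower bound $\abs{\ai'(-\lambda_k^D)}\gtrsim k^{1/6}$. The only cosmetic slip is the hesitation about signs: since $\dot\psi_0(z,0)=-\sqrt{\pi}\ai'(-z)=-\psi_0'(z,0)$, the unperturbed ratio is exactly $+1$ (not $\pm1$), and your worry about the $\mu_k$ versus $\lambda_k^D$ evaluation point is handled in the paper by the same uniformity of the Airy asymptotics that you invoke.
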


\begin{proof}
Abbreviate
\[
\Delta_1(\lambda)
	:= \frac{\psi'(\lambda,0) - \psi'_0(\lambda,0)}{\sqrt{\pi}\ai'(-\lambda)},
\quad
\Delta_2(\lambda)
	:= \frac{\dot{\psi}(\lambda,0) - \dot{\psi}_0(\lambda,0)}
	{\sqrt{\pi}\ai'(-\lambda)}.
\]
It is straightforward to see that
\[
- \frac{\dot{\psi}(\lambda^D_k,0)}{\psi'(\lambda^D_k,0)}
	= 1 - \frac{\Delta_1(\lambda^D_k)}{1 + \Delta_1(\lambda^D_k)}
		- \frac{\Delta_2(\lambda^D_k)}{1 + \Delta_1(\lambda^D_k)}
\]
so it suffices to show that
\[
\Delta_1(\lambda^D_k)\to 0\quad \text{and}\quad \Delta_2(\lambda^D_k)\to 0
\]
as $k\to\infty$.

From Theorem~\ref{thm-dirichlet-eigenvalues} we obtain $\lambda_k^D = -a_k + O(k^{-1/3})$
thus
\[
\sqrt{\pi}\ai'(-\lambda^D_k)
	= (-1)^{k-1}\left(\tfrac32\pi\bigl(k-\tfrac14\bigr)\right)^{1/6}
	\left(1 + o(1)\right)
	= (-1)^{k-1} (\lambda^D_k)^{1/4}\left(1 + o(1)\right)
\]
as $k\to\infty$. On the other hand, from \eqref{eq:estimate-diff-phi-neumann}, we
have
\[
\abs{\psi'(\lambda^D_k,0) - \psi'_0(\lambda^D_k,0)}
	\le C \omega(\lambda^D_k) e^{C\omega(\lambda^D_k)} \abss{\lambda^D_k}^{1/4}
\]
hence the assertion on $\Delta_1(\lambda^D_k)$ holds true since
$\omega(\lambda^D_k)\to 0$ as $k\to\infty$ due to Lemma~\ref{rem:maybe-true}.
Finally, \eqref{prop:another-estimate} implies the corresponding assertion on
$\Delta_2(\lambda^D_k)$.
\end{proof}

\subsection{Neumann boundary condition}

The analysis of the asymptotic behavior of $\spec(H^N)$
does not differ much from the Dirichlet case. We start by defining the contours
\begin{equation*}
\cF^m := \left\{z\in\C: \abs{\zeta} = \bigl(m-\tfrac14\bigr)\pi\right\},
\quad
\cF_k := \left\{z\in\C: \abs{\zeta - \bigl(k+\tfrac14\bigr)\pi}=\tfrac{\pi}{2}\right\},
\quad
m,k\in\N.
\end{equation*}
As expected, $\cF_k$ encloses exactly one zero of $\ai'(-\lambda)$
for sufficiently large values of $k$.

\begin{lemma}
\label{lem:bound-for-gA-Neumann}
There exists $m_0, k_0\in\N$ such that, for every $m\ge m_0$ and $k\ge k_0$, the
following statement holds true:
\begin{equation*}
\bigl(1 + \abs{x-z}^{1/4}\bigr) g_A(-z) < 16\sqrt{\pi} \abs{\text{\rm Ai}'(-z)},
\end{equation*}
whenever $z\in\mathcal{F}^m$ or $z\in\mathcal{F}_k$.
\end{lemma}

The proof of this assertion is nearly identical to the proof of
Lemma~\ref{lem:bound-for-gA}, except that it relies on the identities
\begin{align*}
\ai'(z)
	= -z^{1/4} \frac{e^{-\zeta}}{2\sqrt{\pi}}
	\left[1+W_3(z)\right],
	\quad \abs{\arg(z)}\le\tfrac{2\pi}{3},
	\quad \abs{z}\ge 1,
\\[1mm]
\ai'(-z)
	= \frac{z^{1/4}}{\sqrt{\pi}}
	\left[\sin\left(\zeta-\tfrac{\pi}{4}\right)
	+W_4(z)\right],
	\quad \abs{\arg(z)}\le\tfrac{\pi}{3},
	\quad \abs{z}\ge 1,
\end{align*}
where the functions $W_3(z)$ and $W_4(z)$ satisfy
\begin{align*}
\abs{\frac{W_3(z)}{\zeta^{-1}}}\le D_1,
	\quad \abs{\arg(z)}\le\tfrac{2\pi}{3},
	\quad \abs{z}\ge 1,\nonumber
\\[1mm]
\abs{\frac{W_4(z)}
{\zeta^{-1}e^{\abs{\im\zeta}}}}\le D_2,
	\quad \abs{\arg(z)}\le\tfrac{\pi}{3},
	\quad \abs{z}\ge 1.\label{eq:w2-Neumann}
\end{align*}
The details are therefore omitted.

\begin{theorem}
\label{thm-mixed-bc-eigenvalues}
Suppose $q\in L^1(\R_+)$. Then,
the eigenvalues of $H^N$ satisfy
\[
\lambda^N_k = \left(\tfrac32\pi\bigl(k-\tfrac34\bigr)\right)^{2/3}
			\left(1 + O(k^{-1})\right),\quad k\to\infty.
\]
\end{theorem}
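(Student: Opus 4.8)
The plan is to mirror the proof of Theorem~\ref{thm-dirichlet-eigenvalues} almost line for line, replacing the Dirichlet boundary datum $\psi(z,0)$ by the Neumann one $\psi'(z,0)$, the Airy function by its derivative, and the contours $\cE^m,\cE_k$ by $\cF^m,\cF_k$. I would abbreviate
\[
\psi_0'(z):=\psi_0'(z,0)=\sqrt{\pi}\,\ai'(-z),\qquad
\psi'(z):=\psi'(z,0),\qquad
\mu_k':=-a_k',
\]
so that $\spec(H^N)$ consists of the real zeros of $\psi'$, while the $\mu_k'$ are the simple zeros of $\psi_0'$. Since $\sup_{z\in\C}\omega(z)\le\norm{q}<\infty$, estimate~\eqref{eq:estimate-diff-phi-neumann} at $x=0$ gives, after absorbing $e^{C\omega(z)}$ into the generic constant,
\[
\abs{\psi'(z)-\psi_0'(z)}\le C\,\omega(z)\,\abs{z}^{1/4}g_A(-z),
\]
and for $z=\lambda\in\R_+$ one has $g_A(-\lambda)=1$, hence $\abs{\psi'(\lambda)-\psi_0'(\lambda)}\le C\,\omega(\lambda)\,\lambda^{1/4}$.

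Next I would localize the real zeros of $\psi'$. Fixing $\delta_k:=c_0(\tfrac32\pi k)^{-1/3}$ with a constant $c_0>0$ small enough that, for $k$ large, the interval $(\mu_k'-\delta_k,\mu_k'+\delta_k)$ stays inside the region bounded by $\cF_k$ and contains no zero of $\psi_0'$ besides $\mu_k'$, I would differentiate and use the Airy equation $\ai''(w)=w\,\ai(w)$ to get $\partial_z\psi_0'(z)=-\sqrt{\pi}\,\ai''(-z)=\sqrt{\pi}\,z\,\ai(-z)$. Because $-\mu_k'=a_k'$ is a zero of $\ai'$, the map $z\mapsto\ai(-z)$ is at an extremum there, so a Taylor estimate at $z=\mu_k'$ (the second derivative being $\ai''(-z)=-z\,\ai(-z)$, of order $(\mu_k')^{3/4}$) together with the asymptotics of $\ai$ and the known behaviour of $a_k'$ gives $\abs{\ai(-z)}\ge\tfrac12\abs{\ai(-\mu_k')}\asymp(\mu_k')^{-1/4}$ uniformly for $\abs{z-\mu_k'}\le\delta_k$ and $k$ large; hence $\abs{\partial_z\psi_0'(z)}\asymp(\mu_k')^{3/4}\asymp(\tfrac32\pi k)^{1/2}$ there. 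As $\psi_0'(\mu_k')=0$, the mean value theorem then gives $\abs{\psi_0'(\mu_k'\pm\delta_k)}\asymp(\tfrac32\pi k)^{1/2}\delta_k\asymp(\tfrac32\pi k)^{1/6}$, whereas the estimate above and Lemma~\ref{rem:maybe-true} give $\abs{\psi'(\mu_k'\pm\delta_k)-\psi_0'(\mu_k'\pm\delta_k)}\le C\,\omega(\mu_k'\pm\delta_k)\,(\mu_k')^{1/4}=o(1)\cdot(\tfrac32\pi k)^{1/6}$. So for all large $k$ the perturbation is strictly dominated by $\psi_0'$ at both endpoints, whence $\psi'$ carries the sign of $\psi_0'$ there; since $\psi_0'$ changes sign across $\mu_k'$, $\psi'$ must vanish somewhere in $(\mu_k'-\delta_k,\mu_k'+\delta_k)$.

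Finally I would invoke Rouché's theorem exactly as in the Dirichlet case, now using Lemma~\ref{lem:bound-for-gA-Neumann} in place of Lemma~\ref{lem:bound-for-gA}. On each $\cF_k$ with $k$ large, \eqref{eq:bound-for-gA-Neumann} and the estimate above give $\abs{\psi'(z)-\psi_0'(z)}<8C\,\omega(z)\abs{\psi_0'(z)}$; after enlarging the threshold so that $\omega(z)\le(8C)^{-1}$ on $\cF_k$, Rouché shows $\psi'$ has exactly one zero inside $\cF_k$, which must be the real zero found above. A single large contour $\cF^K$ --- chosen to enclose the finitely many negative eigenvalues, to satisfy \eqref{eq:bound-for-gA-Neumann}, and to carry $\omega\le(8C)^{-1}$ --- then fixes the total count, hence the labelling, so that $\lambda_k^N\in(\mu_k'-\delta_k,\mu_k'+\delta_k)$ for all large $k$. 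Since $\delta_k/\mu_k'=O(k^{-1})$ and $\mu_k'=\bigl(\tfrac32\pi(k-\tfrac34)\bigr)^{2/3}\bigl(1+O(k^{-2})\bigr)$, this yields $\lambda_k^N=\bigl(\tfrac32\pi(k-\tfrac34)\bigr)^{2/3}\bigl(1+O(k^{-1})\bigr)$.

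The step I expect to be most delicate is the balancing in the localization argument, which is tighter here than in the Dirichlet case: the Neumann derivative estimate \eqref{eq:estimate-diff-phi-neumann} carries a growing factor $\abs{z}^{1/4}$ where \eqref{eq:estimate-diff-phi} had the decaying $(1+\abs{z}^{1/4})^{-1}$, so the perturbation of $\psi'$ near $\mu_k'$ has size $\asymp\omega(z)\,k^{1/6}$ --- exactly the order of $\abs{\psi_0'(\mu_k'\pm\delta_k)}$ itself. The strict inequality I need therefore cannot come from the smallness of $\delta_k$ alone; it is only the decay $\omega(z)\to0$ supplied by Lemma~\ref{rem:maybe-true}, which genuinely uses the full hypothesis $q\in L^1(\R_+)$, that makes the perturbation lower order and closes the argument. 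Finally, $c_0$ must be taken small enough for the localization interval to sit inside $\cF_k$, to meet only one zero of $\psi_0'$, and for $\partial_z\psi_0'$ to keep a fixed sign and modulus of order $(\mu_k')^{3/4}$ throughout it.
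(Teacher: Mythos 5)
Your proposal follows essentially the same route as the paper's own (deliberately abbreviated) proof: the estimate \eqref{eq:estimate-diff-phi-neumann} at $x=0$, localization of the real zeros near $-a_k'$ via the mean value theorem using $\dot{\psi}_0'(\lambda)=\sqrt{\pi}\,\lambda\,\ai(-\lambda)\asymp\lambda^{3/4}$, and Rouch\'e's theorem on the contours $\cF^m,\cF_k$ with Lemma~\ref{lem:bound-for-gA-Neumann}. Your remark on the tighter balancing --- that the perturbation and $\abs{\psi_0'(\mu_k'\pm\delta_k)}$ are both of order $k^{1/6}$, so that only the decay $\omega(z)\to0$ from Lemma~\ref{rem:maybe-true} closes the gap --- correctly identifies the one point where the Neumann case genuinely differs from the Dirichlet one.
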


\begin{proof}
Since it is similar to the proof of Theorem~\ref{thm-dirichlet-eigenvalues},
we only hint at the main departure from it. Recalling that $\sup_{z\in\C}\omega(z)<\infty$,
\eqref{eq:estimate-diff-phi-neumann} implies
\[
\abs{\psi'(z)-\psi'_0(z)}
	\le C\omega(z)\bigl(1+\abs{z}^{1/4}\bigr)g_A(-z)
\]
for certain positive constant $C$. Because of
Lemma~\ref{lem:bound-for-gA-Neumann},
there exists $k_1\in\N$ such that
\[
\abs{\psi'(z)-\psi'_0(z)} < \abs{\psi'_0(z)}
\]
for all $z\in\cF^{k_1}$ and $z\in\cF_k$ for every $k>k_1$, hence
$\abss{(\lambda^N_k)^{3/2} - (-a'_k)^{3/2}}< \pi$ for all $k$ large enough.
\end{proof}

\begin{theorem}
\label{thm:norming-constant-neumann}
Suppose $q\in L^1(\R_+)$. Then the Neumann norming constants $\nu_k^N$ satisfies
\[
\frac{1}{\nu_k^N}
	= \left(\tfrac32\pi\bigl(k-\tfrac34\bigr)\right)^{2/3}\left(1 + o(1)\right)
\]
as $k\to\infty$.
\end{theorem}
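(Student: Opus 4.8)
The plan is to mimic the Dirichlet norming constant argument (Theorem~\ref{thm:norming-constant-dirichlet}) as closely as possible, adapting it to the fact that the Neumann norming constant involves $\psi(z,0)$ rather than $\psi'(z,0)$ in the denominator, which is why an extra factor of size $\lambda^{2/3}$ appears. Recall from Section~\ref{sec:preliminaries} that
\[
\frac{1}{\nu_k^N}
	= \frac{\dot{\psi}'(\lambda_k^N,0)}{\psi(\lambda_k^N,0)}.
\]
I would introduce the abbreviations $\psi(\lambda):=\psi(\lambda,0)$ etc., write $\lambda_k:=\lambda_k^N$, and factor out the unperturbed quantities by defining
\[
\Xi_1(\lambda)
	:= \frac{\psi(\lambda,0) - \psi_0(\lambda,0)}{\psi_0(\lambda,0)},
\qquad
\Xi_2(\lambda)
	:= \frac{\dot{\psi}'(\lambda,0) - \dot{\psi}'_0(\lambda,0)}{\dot{\psi}'_0(\lambda,0)},
\]
so that
\[
\frac{\dot{\psi}'(\lambda_k,0)}{\psi(\lambda_k,0)}
	= \frac{\dot{\psi}'_0(\lambda_k,0)}{\psi_0(\lambda_k,0)}
	\cdot\frac{1+\Xi_2(\lambda_k)}{1+\Xi_1(\lambda_k)}.
\]

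Next I would show that the leading factor has the stated asymptotics. Using $\psi_0(\lambda,0)=\sqrt{\pi}\,\ai(-\lambda)$ and $\dot\psi_0'(\lambda,0) = -\sqrt\pi\,\ai''(-\lambda) = \sqrt\pi\,\lambda\,\ai(-\lambda)$ (already noted in the proof of Theorem~\ref{thm-mixed-bc-eigenvalues}), one gets
\[
\frac{\dot{\psi}'_0(\lambda_k,0)}{\psi_0(\lambda_k,0)} = \lambda_k,
\]
and then Theorem~\ref{thm-mixed-bc-eigenvalues} gives $\lambda_k = \left(\tfrac32\pi(k-\tfrac34)\right)^{2/3}(1+O(k^{-1}))$, which is exactly the claimed main term (the $O(k^{-1})$ being absorbed into the $1+o(1)$). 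So the whole task reduces to proving $\Xi_1(\lambda_k)\to 0$ and $\Xi_2(\lambda_k)\to 0$. For $\Xi_1$ I would combine the estimate \eqref{eq:estimate-diff-phi} (at $x=0$, so $g_A(-\lambda_k)=1$ since $\lambda_k>0$), which gives $|\psi(\lambda_k,0)-\psi_0(\lambda_k,0)|\le C\omega(\lambda_k)(1+|\lambda_k|^{1/4})^{-1}$, with a lower bound on $|\psi_0(\lambda_k,0)|=\sqrt\pi|\ai(-\lambda_k)|$. The latter requires care: $\lambda_k$ is near the zero $-a_k'$ of $\ai'$, where $\ai$ is \emph{not} small---indeed $\sqrt\pi|\ai(-a_k')| \sim (\tfrac32\pi(k-\tfrac34))^{1/6}$ by \eqref{eq:ai-detailed} up to the $\tfrac{\pi}{4}$-shift, and since $\lambda_k = -a_k' + O(k^{-1/3})$ the value stays comparable. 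Hence $|\Xi_1(\lambda_k)| \le C\omega(\lambda_k)\,|\lambda_k|^{-1/4}\cdot|\lambda_k|^{-1/6}(1+o(1)) \to 0$ because $\omega(\lambda_k)\to 0$ by Lemma~\ref{rem:maybe-true}.

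For $\Xi_2$ I would invoke \eqref{prop:last-estimate} at $x=0$: $|\dot\psi'(\lambda_k,0)-\dot\psi_0'(\lambda_k,0)| \le C e^{C\|q\|}\big(|\lambda_k|^{1/4}\|q\|(1+\|q\|) + 4|\lambda_k|(1+|\lambda_k|^{1/4})^{-1}\omega(\lambda_k)\big)$, and divide by $|\dot\psi_0'(\lambda_k,0)| = \sqrt\pi|\lambda_k|\,|\ai(-\lambda_k)| \sim |\lambda_k|\cdot|\lambda_k|^{1/6}$. The first term contributes $O(|\lambda_k|^{1/4}\cdot|\lambda_k|^{-7/6}) = O(|\lambda_k|^{-11/12})\to 0$, and the second contributes $O(|\lambda_k|^{3/4}\cdot|\lambda_k|^{-7/6}\omega(\lambda_k)) = O(|\lambda_k|^{-5/12}\omega(\lambda_k))\to 0$, again using $\omega(\lambda_k)\to 0$. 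Therefore $\tfrac{1+\Xi_2(\lambda_k)}{1+\Xi_1(\lambda_k)} = 1+o(1)$ and the result follows. The main obstacle---and the only place one must be genuinely attentive---is pinning down the lower bound $|\ai(-\lambda_k)| \gtrsim |\lambda_k|^{1/6}$ at the perturbed Neumann eigenvalues: one has to check that the $O(k^{-1/3})$ displacement of $\lambda_k$ from $-a_k'$ does not move $\sin(\tfrac23\lambda_k^{3/2}-\tfrac{\pi}{4})$ (equivalently $\cos(\tfrac23\lambda_k^{3/2}+\tfrac{\pi}{4})$, the cosine being $\pm1+o(1)$ at the $\ai'$-zeros) close to a zero. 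Since $\tfrac23(\lambda_k)^{3/2} = \tfrac23(-a_k')^{3/2} + O(k^{-1/3}\cdot k^{-1/3}\cdot k^{1/3}) = \tfrac23(-a_k')^{3/2} + o(1)$, this displacement in phase is $o(1)$, so the cosine stays bounded away from $0$, exactly as in \eqref{eq:bound-derivative} of the Dirichlet proof. All remaining manipulations are routine.
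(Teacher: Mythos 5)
Your overall strategy is exactly the paper's: you factor $\dot{\psi}'(\lambda,0)/\psi(\lambda,0)$ as $\lambda\,(1+\Xi_2)/(1+\Xi_1)$ with $\Xi_1,\Xi_2$ identical (after using $\psi_0(\lambda,0)=\sqrt{\pi}\ai(-\lambda)$ and $\dot{\psi}'_0(\lambda,0)=\sqrt{\pi}\lambda\ai(-\lambda)$) to the paper's $\Delta_4,\Delta_3$, and you reduce everything to $\Xi_1(\lambda_k^N),\Xi_2(\lambda_k^N)\to 0$ via \eqref{eq:estimate-diff-phi} and \eqref{prop:last-estimate}. The paper stops there; you fill in the quantitative details, and in doing so you introduce two errors worth flagging.

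First, your lower bound on the denominator is upside down: from \eqref{eq:ai-detailed}, $\ai(-z)=\tfrac{1}{\sqrt{\pi}z^{1/4}}\bigl[\sin(\zeta+\tfrac{\pi}{4})+W_2\bigr]$, so near the zeros of $\ai'$ one has $\sqrt{\pi}\,\abs{\ai(-a_k')}\sim \bigl(\tfrac32\pi(k-\tfrac34)\bigr)^{-1/6}=\abs{a_k'}^{-1/4}$, i.e.\ it \emph{decays}; it does not grow like $\bigl(\tfrac32\pi(k-\tfrac34)\bigr)^{1/6}$ as you assert (this is also forced by consistency with the paper's own $\abs{\dot{\psi}'_0}\asymp\lambda^{3/4}$ in the proof of Theorem~\ref{thm-mixed-bc-eigenvalues}). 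Since you are claiming a lower bound on a denominator that is larger than the truth, your displayed exponents for $\Xi_1$ and $\Xi_2$ are all wrong as derivations. Fortunately the conclusion survives the correction: with $\abs{\psi_0(\lambda_k,0)}\gtrsim\lambda_k^{-1/4}$ one gets $\abs{\Xi_1(\lambda_k)}\le C\omega(\lambda_k)\to 0$, and with $\abs{\dot{\psi}'_0(\lambda_k,0)}\gtrsim\lambda_k^{3/4}$ the two terms of \eqref{prop:last-estimate} contribute $O(\lambda_k^{-1/2})$ and $O(\omega(\lambda_k))$ respectively. Second, your justification that the phase $\tfrac23(\lambda_k^N)^{3/2}-\tfrac23(-a_k')^{3/2}$ is $o(1)$ is incorrect arithmetic: $\lambda_k^N=-a_k'+O(k^{-1/3})$ and $\tfrac{d}{d\lambda}\tfrac23\lambda^{3/2}=\lambda^{1/2}=O(k^{1/3})$ give only an $O(1)$ phase displacement, which is \emph{not} enough to keep $\sin(\zeta+\tfrac{\pi}{4})$ away from zero. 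This is the one place a genuine argument is needed: you must use the localization actually produced by the Rouch\'e step of Theorem~\ref{thm-mixed-bc-eigenvalues}, namely that $\zeta(\lambda_k^N)$ lies in the disc $\abs{\zeta-(k-\tfrac34)\pi}<\tfrac{\pi}{2}$ (and, since $\omega\to 0$, in discs of shrinking radius), which does force $\abs{\sin(\zeta(\lambda_k^N)+\tfrac{\pi}{4})}$ to stay bounded below. With those two repairs your argument is complete and coincides with the paper's.
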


\begin{proof}
The argument goes along the lines of the proof of Theorem~\ref{thm:norming-constant-dirichlet}.
Define
\[
\Delta_3(\lambda)
	:= \frac{\dot{\psi}'(\lambda,0) -
		\dot{\psi}'_0(\lambda,0)}{\sqrt{\pi}\lambda\ai(-\lambda)},
\quad
\Delta_4(\lambda)
	:= \frac{\psi(\lambda,0) - \psi_0(\lambda,0)}
	{\sqrt{\pi}\ai(-\lambda)}.
\]
Then
\[
\frac{\dot{\psi}'(\lambda^N_k,0)}{\psi(\lambda^N_k,0)}
	= \lambda_k^N \frac{1 + \Delta_3(\lambda_k^N)}{1 + \Delta_4(\lambda_k^N)}
\]
so we only need to prove that
\[
\Delta_3(\lambda^D_k)\to 0\quad \text{and}\quad \Delta_4(\lambda^D_k)\to 0
\]
as $k\to\infty$. But this follows from \eqref{eq:estimate-diff-phi} and
\eqref{prop:last-estimate}.
\end{proof}


\begin{acknowledgments}
This research is based upon work supported by Universidad Nacional del Sur (Argentina)
under grant PGI 24/L117.
\end{acknowledgments}


\end{document}